\newtheorem{theorem}{\bf Theorem}[section]
\def \bes{\begin{eqnarray}}
\def \ees{\end{eqnarray}}
\def \bns{\begin{eqnarray*}}
\def \ens{\end{eqnarray*}}
\def \nnu{\nonumber}
\begin{document}

\title{A homotopy training algorithm for fully connected neural networks}

\author[$1$]{Qipin Chen}
\author[$1$]{Wenrui Hao}

\affil[$1$]{Department of Mathematics, Pennsylvania State University, University Park, PA 16802}




\maketitle

\begin{abstract}
	In this paper, we present a Homotopy Training Algorithm (HTA) to solve
	optimization problems arising from fully connected neural networks with complicated
	structures. The HTA dynamically
	builds the neural network starting from a simplified version and ending with the
	fully connected network via adding layers and nodes adaptively.
	Therefore, the corresponding optimization problem is easy to solve at
	the beginning and connects to the original model via a continuous path
	guided by the HTA, which provides a high probability of obtaining a global
	minimum. By gradually increasing the complexity of the model along the
	continuous path, the HTA provides a rather good solution to the original
	loss function. This is confirmed by various numerical results including
	VGG models on CIFAR-10. For example, on the VGG13 model with batch
	normalization, HTA reduces the error rate  by 11.86\% on test dataset compared with the traditional
	method.  Moreover, the HTA also allows us
	to find the optimal structure for a fully connected neural network by
	building the neutral network adaptively.
\end{abstract}

\section{Introduction}
	The deep neural network (DNN) model has been experiencing an
	extraordinary resurgence in many important artificial intelligence
	applications since the late 2000s. In particular, it has been able to
	produce state-of-the-art accuracy in computer vision
	\cite{simonyan2014very}, video analysis \cite{he2016deep}, natural
	language processing \cite{collobert2008unified}, and speech recognition
	\cite{sainath2013deep}. In the annual contest ImageNet Large Scale
	Visual Recognition Challenge (ILSVRC), the deep convolutional neural
	network (CNN) model has achieved the best classification accuracy since
	2012, and has exceeded human ability on such tasks since 2015
	\cite{markoff2015learning}. 
	%
	The success of learning through neural networks with large model size,
	i.e., deep learning, is widely believed to be the result of being able
	to adjust millions to hundreds of millions of parameters to achieve
	close approximations to the target function. The approximation is
	usually obtained by minimizing its output error over a training set
	consisting of a significantly large amount of samples.  Deep learning
	methods, as the rising star among all machine learning methods in
	recent years, have already had great success in many applications. Many
	advancements
	\cite{cang2018representability,cang2018integration,yin2018binaryrelax,yin2016quantization}
	in deep learning have been made in the last few years.  However, as the
	size of new state-of-the-art models continues to grow larger, they rely
	more heavily on efficient algorithms for training and making inferences
	from such models. This clearly places strong limitations on the
	application scenarios of DNN models for robotics \cite{konda2012real},
	auto-pilot automobiles \cite{hammerla2016deep}, and aerial systems
	\cite{maire2014convolutional}.   At present, there are two big challenges in
	fundamentally understanding deep neural networks: \begin{itemize} \item How to
	efficiently solve the highly nonlinear and non-convex
	optimization problems that arise during the training of a deep learning model.
	\item How to design a deep neural network structure for specific problems.
	\end{itemize}
	In order to solve these challenges, in this paper, we will present a new
	training algorithm based on the homotopy continuation method
	\cite{BHS,BHSW,MorganSommese1}, which has been successfully used to study
	nonlinear problems such as nonlinear differential equations
	\cite{HHHLSZ,HHHS,WHL}, hyperbolic conservation laws \cite{HHSSXZ,HY}, data
	driven optimization \cite{Hpar,HHarlim}, physical systems \cite{HNS1,HNS}, and
	some more complex free boundary problems arising from biology \cite{HCF,HF}. In
	order to tackle the nonlinear optimization problem in DNN, the homotopy
	training algorithm is designed and shows efficiency and feasibility for  fully
	connected neural networks with complex structures. The HTA also provides a new
	way to design a deep fully connected neural network with an optimal structure.
	This homotopy setup presented in this paper can  also be extended to other
	neural network such as CNN and RNN. In this paper, we will focus on fully
	connected DNNs only.
	The rest of this paper is organized as follows. We first introduce the HTA in
	Section 2 and then discuss the theoretical analysis in Section 3.  Several
	numerical examples are given in Section 4 to demonstrate the accuracy and
	efficiency of the HTA. Finally, applications of HTA to computer vision will be
	given in Section 5.

\section{Homotopy Training Algorithm}
\label{sec:main}
	The basic idea of HTA is to train a simple model at the
	beginning, then adaptively increase the structure's complexity, and
	eventually to train the original model. We will illustrate the idea of the
	homotopy setup by using a fully connected neural network with
	$\mathbf{x}=(x_1,\cdots x_n)^T$ as the input and
	$\mathbf{y}=(y_1,\cdots, y_m)^T$ as the output. More specifically, for a
	single hidden layer, the neural network (see Fig.  \ref{Fig:SL}) can be
	written as \bes \mathbf{y}=f(\mathbf{x})=W_2^T \sigma(W_1^T
	\mathbf{x}+\beta_1)+\beta_2,\label{DNN1}\ees where $\sigma$ is the
	activation function (for example, ReLU),  $W_1\in R^{n\times d_1}$ and
	$W_2\in R^{d_1\times m}$ are parameter matrices representing the
	weighted summation, $\beta_1 \in R^{d_1} $ and $\beta_2\in R^m$ are
	vectors representing bias, and $d_1$ is the number of nodes of the
	single hidden layer, namely, the width.
	Similarly, a fully connected neural network with two hidden layers (see
	Fig. \ref{Fig:2L})is written as \bes \mathbf{y}=f(\mathbf{x})=W_3^T
	\sigma(W_2^T \sigma(W_1^T
	\mathbf{x}+\beta_1)+\beta_2)+\beta_3,\label{DNN2}\ees where $W_1\in
	R^{n\times d_1}$, $W_2\in R^{d_1\times d_2}$, $W_3\in R^{d_2\times m}$,
	$\beta_1 \in R^{d_1} $, $\beta_2\in R^{d_2}$, $\beta_3\in R^{m}$ and $d_2$ is the width of the second layer.

	Then the homotopy continuation method is introduced to track the
	minimizer of (\ref{DNN1}) to the minimizer of (\ref{DNN2}) by setting
	\bes \mathbf{y}(t)&=&H_1(x;W_j,\beta_j,t)=(1-t)[W_2^T \sigma(W_1^T
	x+\beta_1)+\beta_2]\nnu\\&\ &+t[W_3^T \sigma(W_2^T
	\sigma(W_1^T x+\beta_1)+\beta_2)+\beta_3]~j=1,2,3.\label{Hom}\ees Then an optima of (\ref{DNN2})
	will be obtained by tracking the homotopy parameter $t$ from $0$ to
	$1$.
	The idea is that the model of (\ref{DNN1}) is easier to train than
	that of (\ref{DNN2}). Moreover, the homotopy setup will follow the universal
	approximation theory \cite{hornik1989multilayer,huang2006universal} to find an approximation trajectory to reveal the
	real nonlinear relationship between the input $x$ and the output $y$.
	Similarly, we can extend this homotopy idea to any two layers \bes
	H_i(x;\theta,t)=(1-t)y_i(x;\theta)+t y_{i+1}(x;\theta),\label{Hom_t}\ees where
	$y_{i}(x;\theta)$ is the approximation of a fully connected neural
	network with $i$ layers and $\theta$ represents parameters that are
	weights of the neural network. %
	In this case, we can train a fully connected neural network
	``node-by-node" and ``layer-by-layer." 	This computational algorithm
	can significantly reduce  computational costs of deep learning, which is
	used on large-scale data and complex problems.
	Using the homotopy setup, we are able to rewrite the ANN, CNN, and RNN
	in terms of a specific start system such as (\ref{DNN1}).  After designing
	a proper homotopy, we need to train this model with some data sets. In
	the homotopy setup, we need to solve the following optimization
	problem: \begin{equation} \theta(t)=\mathop{\arg\min}_{\theta}
	\sum_{j=1}^N \| H_i(X^j;\theta,t) - Y^j \|^2_{U}, \label{OPt}\end{equation} where
	$X^j$ and $Y^j$ represent data points  and $N$ is the number of data
	points in a mini-batch. In this optimization, the homotopy setup tracks
	the optima from a simpler optimization problem to a more complex one.
	The loss function in (\ref{OPt}) could  be changed to other types of
	entropy functions \cite{Goodfellow-et-al-2016,vapnik1999overview}.

	{\bf A simple illustration:} We consider a simple neural network with
	two hidden layers to approximate a scalar function $y=f(x)$ (the width
	of two hidden layers are 2 and 3 respectively). The
	detailed HTA algorithm for training this neural network is listed in
	{\bf Algorithm 1}. Then the neural network with a single layer in
	(\ref{DNN1}) gives us that $W_1\in R^{1\times 2}$, $W_2\in R^{2\times
	1}$, $\beta_1\in R^{2\times1}$, and $\beta_2\in R$. By denoting all the
	weights $W_1,~W_2,~\beta_1,\beta_2$ as $\theta_1$,  the optimization
	problem (\ref{OPt}) for $t=0$ is formulated as $min
	f_1(\theta_1)$. Then a minimizer $\theta_1^*=\{W_1^*,
	W_2^*,\beta_1^*\}$ satisfies the necessary condition
	\bes\nabla_{\theta_1} f_1(\theta_1^*)=0.\label{Sl1}\ees Similarly, for
	the neural network with two hidden layers, we have that, in
	(\ref{DNN2}), $W_2\in R^{2\times 3}$ and $\beta_2\in R^{3\times1}$ are
	changed, $W_3\in R^{3\times 1}$ and $\beta_3\in R$. Then all the new
	variables introduced by the second hidden layer ($W_3$, $\beta_3$ and
	part of $W_2$ and $\beta_2$) are denoted as $\theta_2$. The total
	variables $\theta=\theta_1\cup \theta_2$ formulate  the optimization
	problem of two hidden layers, namely, (\ref{OPt}) for
	$t=1$, as  $min f_2(\theta)$ and solve it by using (\ref{Hom_t}),
	which is equivalent to solving the following nonlinear equations: \bes
	Hom(\theta,t):=t\nabla_\theta
	f_2(\theta)+(1-t) \left( \begin{aligned}
		\nabla_{\theta_1} \tilde{f}_2(\theta)\\ \nabla_{\theta_2}
		\tilde{f}_2(\theta)-\nabla_{\theta_2} \tilde{f}_2(\theta^0)\\
	\end{aligned}\right)=0, \label{Hom1}\ees where $\tilde{f}_2$ is the
	objective function with  the activation function of the second hidden
	layer as the identity and $\theta^0$ is constructed as \[W_1^0=W_1^*,~
	W_2^0=[W_2^*,0,0],~ W_3^0=[1,0,0]^T \hbox{~and~}
	\beta_1^0=\beta_1^*,~\beta_2^0=[\beta_2^*,0,0]^T, ~\beta_3^0=0.\] By
	noticing that $\nabla_{\theta_1}
	\tilde{f}_2(\theta^0)=\nabla_{\theta_1} f_1(\theta_1^*)=0$, we have
	that $Hom(\theta^0,0)=0$, which implies that  the neural network with a
	single layer can be rewritten as a special form of the neural network
	with two layers. Then we can solve the optimization problem $min
	f_2(\theta)$ by tracking (\ref{Hom1}) with respect to $t$ from 0 to 1.
	This homotopy technique is quite often used in solving nonlinear
	equations \cite{HHHLSZ,HHHS,WHL}. However, in practice, we will use
	some advanced optimization methods for solving (\ref{OPt}), such as the
	stochastic gradient decent method, instead of solving nonlinear
	equations (\ref{Hom1}) directly.

\section{Theoretical analysis}
\label{sec:con}
	In this section, we analyze the convergence of HTA between any two
	layers, namely, from $i$-th layer to $i+1$-th layer. Assuming that we
	have a known minimizer of a fully connected neural network with $i$
	layers, we prove that we can get a minimizer by adding $i+1$-th layer
	through the HTA. First, we consider the expectation of the loss
	function as
	\begin{equation}
		\mathbb{E}_\xi(\mathcal{L}(H_{i}(x_\xi;\theta,t),y_\xi),
	\end{equation}
	where $\mathcal{L}$ is the categorical cross entropy
	loss function \cite{rao2019natural,bernico2018deep} and is defined as
	$\mathcal{L}(x,y) = -x(y) + log(\sum_j e^{x[j]})$, and $\xi$ is a
	random variable due to random algorithms for solving the optimization
	problem for each given $t$.
	For simplicity, we denote
	\begin{eqnarray}
		F(\theta;\xi,t)&:=&\mathcal{L}(H_{i}(x_\xi;\theta,t),y_\xi),\\
		f(\theta;t)&:=&\mathbb{E}_\xi[F(\theta;\xi,t)],\\
		G(\theta;\xi,t)&:=&\nabla_\theta F(\theta;\xi,t),
	\end{eqnarray} where the index $i$ does not contribute to the analysis
	and therefore is ignored in our notation.

	By denoting $\displaystyle\theta_*^t:=argmin_\theta f(\theta;t)$, we define our
	stochastic gradient scheme for any given $t$:
	\begin{equation}
		\label{sgd_scheme}
		\theta_{k+1} = \theta_k - \gamma_k G(\theta_k;\xi_k,t).
	\end{equation}
	First we have the following convergence theorem for any given $t$
	 with  the sigmoid activation function.
	\begin{theorem} {\bf (Nonconvex Convergence)} If $\nabla_\theta
		H_i(x;\theta,t)$ is bounded for a given $t$, namely, $
		|\nabla_\theta H_i(x;\theta,t)| \le M_t,$ and  $\{\theta_k\}$
		is contained in a bounded open set, supposing that
		(\ref{sgd_scheme}) is run with a step-size sequence satisfying
		\begin{equation}
			\sum_{k=1}^\infty \gamma_k = \infty \text{ and }
			\sum_{k=1}^\infty \gamma_k^2 < \infty,
		\end{equation} then we have 		\begin{equation}
			\mathbb{E}[\frac{1}{A_k}\sum_{k=1}^K \gamma_k\|\nabla
			f(\theta_k;t)\|_2^2]\rightarrow 0 \text{ as }
			K\rightarrow \infty \hbox{~ with~} A_k:=\sum_{k=1}^K \gamma_k.
		\end{equation}
	\label{nonconvex}

	\begin{proof}
		First, we prove the {\bf Lipschitz-continuous objective
		gradients} condition \cite{BCN}, which means that $f(\theta;t)$
		is $C^1$ and $\nabla f(\theta;t)$ is Lipschitz continuous
		with respect to $\theta$:
		\begin{itemize}
			\item {\bf  $f(\theta; t)$ is $C^1$.} Since $y_i(x_\xi;
				\theta)\in C^1$ for $\theta$, we have
					$H(x_\xi;\theta,t) \in C^1$. Moreover, since
					$\mathcal{L}(\cdot, y)$ is  $C^1$, we have that
					$F(\theta;\xi,t)\in C^1$ or that $\nabla_\theta
					F(\theta; \xi,t)$ is continuous. Considering
					\begin{equation} \nabla_\theta f(\theta;t) =
					\nabla_\theta \mathbb{E}_\xi(F(\theta;\xi,t)) =
					\mathbb{E}_\xi(\nabla_\theta F(\theta;\xi,t)),
					\end{equation} we have  that $\nabla_\theta
					f(\theta,t)$ is continuous  or that $f(\theta;
					t)\in C^1$.
			\item {\bf $\nabla f(\theta;t)$ is Lipschitz continuous.}
				Since	\begin{eqnarray} \nabla_\theta f(\theta;t)
				&=& \mathbb{E}_\xi[\nabla_x
				\mathcal{L}(H(x_\xi;\theta,t),y_\xi)\nabla_\theta
				H(x_\xi;\theta,t)], \end{eqnarray} we will prove that
					both
					$\nabla_x\mathcal{L}(H(x_\xi;\theta,t),y_\xi)$
					and $\nabla_\theta H(x_\xi; \theta,t)$ are
					bounded and Lipschitz continuous.  Because both
					$\sigma(x)=\frac{1}{1+e^{-x}}$ and
					$\sigma'(x)=\sigma(x)(1-\sigma(x))$ are
					Lipschitz continuous and  $\{\theta_k\}$ is
					bounded (assumption of Theorem
					\ref{nonconvex}), $\nabla_\theta H(x_\xi;
					\theta,t)$ is Lipschitz continuous. ($x_\xi$ is
					bounded because the size of our dataset is
					finite.)

				By differentiating  $\mathcal{L}(x,y)$, we have
					\begin{equation} \label{part1} \nabla_x
					\mathcal{L}(x,y) =
					(-\delta_y^1+\frac{e^{x_1}}{\sum_je^{x_j}},\cdots,-\delta_y^n+\frac{e^{x_n}}{\sum_je^{x_j}}),
					\end{equation} where  $\delta$ is the Kronecker
					delta. Since 	\begin{equation}
						\frac{\partial}{\partial
						x_k}\frac{e^{x_i}}{\sum_j e^{x_j}} =
						\begin{cases} &\frac{e^{x_i}\sum_j
							e^{x_j}-(e^{x_i})^2}{(\sum_j
							e^{x_j})^2} \quad k=i \\
							&\frac{-e^{x_i}
							e^{x_k}}{(\sum_j e^{x_j})^2}
							\quad k\ne i, \\ \end{cases}
					\end{equation} which implies that
					$\big|\frac{\partial}{\partial
					x_k}\frac{e^{x_i}}{\sum_j e^{x_j}}\big|\leq 2$,
					we see that $\nabla_x \mathcal{L}(\cdot, y)$ is
					Lipschitz continuous and bounded. Therefore,
					$\nabla_x\mathcal{L}(H(x_\xi;\theta,t),y_\xi)$
					is Lipschitz continuous and bounded. Thus,
					$\nabla f(\theta;t)$ is Lipschitz continuous.
		\end{itemize}

		Second, we prove the {\bf first and second moment limits} condition \cite{BCN}:
		\begin{itemize}
		\item[a.]  According to our theorem's assumption,
			$\{\theta_k\}$ is contained in an open set that is
				bounded. Since $f$ is continuous, $f$ is
				bounded;
		\item[b.] Since $G(\theta_k; \xi_k,t)=\nabla_\theta
			F(\theta_k;\xi_k,t)$ is continuous, we have
				\begin{equation}
					\mathbb{E}_{\xi_k}[G(\theta_k;\xi_k,t)]
					=
					\nabla_\theta\mathbb{E}_{\xi_k}[F(\theta_k;\xi_k,t)]
					= \nabla_\theta f(\theta_k;t).
				\end{equation} Therefore, \begin{equation}
					\nabla f(\theta_k;
					t)^T\mathbb{E}_{\xi_k}[G(\theta_k;
					\xi_k,t)]=\nabla f(\theta_k; t)^T\cdot
					\nabla f(\theta_k; t) = \|\nabla
					f(\theta_k; t)\|_2^2 \ge u\|\nabla
					f(\theta_k; t)\|_2^2 \end{equation} for
					$0<u\le 1$.
				
			On the other hand, we have \begin{equation}
			 \|\mathbb{E}_{\xi_k}[G(\theta_k;
			\xi_k,t)]\|_2=\|\nabla f(\theta_k; t)\|_2 \le
			u_G\|\nabla f(\theta_k; t)\|_2 \end{equation} for
			$u_G\ge 1$.  \item[c.] 		Since $\nabla F(\theta_k;
			\xi_k,t)$ is bounded for a given $t$, we have
				$\mathbb{E}_{\xi_k}[\|\nabla F(\theta_k;
				\xi_k,t)\|_2^2]$ is also bounded.  Thus,
				\begin{equation} \mathbb{V}_{\xi_k}[G(\theta_k;
					\xi_k,t)] :=
					\mathbb{E}_{\xi_k}[\|G(\theta_k;
					\xi_k,t)\|_2^2] -
				\|\mathbb{E}_{\xi_k}[G(\theta_k;
				\xi_k,t)]\|_2^2\le
				\mathbb{E}_{\xi_k}[\|G(\theta_k;
				\xi_k,t)\|_2^2], \end{equation} which implies
				that 		$\mathbb{V}_{\xi_k}[G(\theta_k;
				\xi_k,t)]$ is bounded.
		\end{itemize}

		We have checked assumptions 4.1 and 4.3 in  \cite{BCN}. By
		theorem 4.10 in  \cite{BCN}, with the diminishing step-size,
		namely, 		
		\begin{equation}
			\sum_{k=1}^\infty \gamma_k = \infty \text{ and }
			\sum_{k=1}^\infty \gamma_k^2 < \infty,
		\end{equation}
		the following convergence is obtained
		\begin{equation}
			\mathbb{E}[\frac{1}{A_k}\sum_{k=1}^K \gamma_k\|\nabla
			f(\theta_k;t)\|_2^2]\rightarrow 0 \text{ as }
			K\rightarrow \infty.
		\end{equation}
	\end{proof}

	\end{theorem}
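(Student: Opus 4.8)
The plan is to reduce the statement to a black-box convergence result for stochastic gradient methods on smooth nonconvex objectives, namely Theorem 4.10 of \cite{BCN}, whose hypotheses are the \emph{Lipschitz-continuous objective gradients} condition (Assumption 4.1 there) and the \emph{first and second moment limits} condition (Assumption 4.3 there). Once both are verified for $f(\theta;t)=\mathbb{E}_\xi[F(\theta;\xi,t)]$ and the stochastic gradient $G(\theta;\xi,t)$ at the fixed value of $t$, the advertised convergence $\mathbb{E}[A_K^{-1}\sum_{k=1}^K \gamma_k\|\nabla f(\theta_k;t)\|_2^2]\to 0$ follows immediately from the square-summable-but-not-summable step-size assumption. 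So the work is entirely in checking the two conditions.

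For the first condition I would first argue $f(\cdot;t)\in C^1$: the network output $y_i(x_\xi;\theta)$ is a composition of affine maps and the sigmoid activation, hence $C^1$ (indeed $C^\infty$) in $\theta$, so $H_i(x_\xi;\theta,t)$ is $C^1$; since the categorical cross-entropy $\mathcal{L}(\cdot,y)$ is $C^1$, $F(\cdot;\xi,t)$ is $C^1$, and because the dataset is finite the expectation is a finite sum, so $\nabla_\theta$ commutes with $\mathbb{E}_\xi$. For the Lipschitz property of $\nabla f(\cdot;t)$ I would use the chain-rule identity $\nabla_\theta f(\theta;t)=\mathbb{E}_\xi[\nabla_x\mathcal{L}(H(x_\xi;\theta,t),y_\xi)\,\nabla_\theta H(x_\xi;\theta,t)]$ and show each factor is bounded and Lipschitz on the bounded open set containing $\{\theta_k\}$. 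The factor $\nabla_\theta H$ is built from $\sigma$ and $\sigma'=\sigma(1-\sigma)$, both bounded with bounded derivatives; together with boundedness of $\theta$ and of the finitely many inputs $x_\xi$, this makes $\nabla_\theta H$ bounded and Lipschitz. For $\nabla_x\mathcal{L}$ I would compute it explicitly — it is the softmax output minus a one-hot vector — and bound its Jacobian entries (each is $O(1)$, in fact $\le 2$), which shows $\nabla_x\mathcal{L}$ is bounded and Lipschitz; the product of two bounded Lipschitz maps is Lipschitz, and precomposition with the Lipschitz map $\theta\mapsto H$ preserves this, so $\nabla f(\cdot;t)$ is Lipschitz.

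For the second condition I would check its three pieces. Boundedness below of $f$ is immediate: $f$ is continuous and $\{\theta_k\}$ lies in a bounded set. Unbiasedness, $\mathbb{E}_{\xi_k}[G(\theta_k;\xi_k,t)]=\nabla f(\theta_k;t)$, follows again from exchanging $\nabla_\theta$ with the finite-sum expectation; this yields both the required lower bound $\nabla f(\theta_k;t)^T\mathbb{E}_{\xi_k}[G]=\|\nabla f(\theta_k;t)\|_2^2\ge u\|\nabla f(\theta_k;t)\|_2^2$ and the upper bound $\|\mathbb{E}_{\xi_k}[G]\|_2=\|\nabla f(\theta_k;t)\|_2\le u_G\|\nabla f(\theta_k;t)\|_2$ with $0<u\le 1\le u_G$. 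Finally, the boundedness assumption $|\nabla_\theta H_i|\le M_t$ (together with the bounds on $\nabla_x\mathcal{L}$) makes $\|\nabla F(\theta_k;\xi_k,t)\|_2^2$ uniformly bounded, so $\mathbb{E}_{\xi_k}[\|G\|_2^2]$ and hence the variance $\mathbb{V}_{\xi_k}[G]$ are bounded, giving the last piece; then invoke \cite{BCN}.

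I expect the only nontrivial step to be the Lipschitz estimate for $\nabla f(\cdot;t)$ — more precisely, making sure the bound on $\nabla_\theta H$ and its modulus of continuity are genuinely uniform over the bounded set in which the iterates are assumed to live, and handling the composition $\nabla_x\mathcal{L}\circ H$ cleanly; everything else is bookkeeping with finite sums and explicit softmax derivatives. A caveat worth flagging is that the argument presumes the iterates $\{\theta_k\}$ remain in a fixed bounded set, which is taken here as a hypothesis rather than derived from the scheme.
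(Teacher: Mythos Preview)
Your proposal is correct and follows essentially the same route as the paper: verify Assumptions~4.1 and~4.3 of \cite{BCN} (Lipschitz gradients via the chain-rule factorization and explicit softmax Jacobian bound, then boundedness/unbiasedness/bounded variance of the stochastic gradient), and invoke Theorem~4.10 there. The paper's argument matches yours step for step, including the softmax derivative bound $\le 2$ and the use of the bounded-iterate hypothesis you flag.
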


	Second we theoretically explore the existence of solution path
	$\theta(t)$  when $t$ varies from $0$ to $1$ for the convex case.
	 The solution path of $\theta(t)$ might be complex for
	the non-convex case, i.e., bifurcations, and is hard to analyze
	theoretically. Therefore, we analyze the HTA theoretically on the
	convex case only but apply it to non-convex cases in the numerical
	experiments.
	We redefine our
	stochastic gradient scheme for the homotopy process as
	\begin{equation}
		\label{sgd_scheme}
		\theta_{k+1} = \theta_k - \gamma_k G(\theta_k;\xi_k,t_k),
	\end{equation}
	where $\gamma_k$ is the learning rate and $t_0=0$, $t_k\nearrow 1$.
	Instead of considering the local convergence of the HTA
	in a neighborhood of the global minimum, we proved the following
	theorem in a more general assumption, namely, $f$ is a convex and
	differentiable objective function with a bounded gradient.


	\begin{theorem} {\bf (Existence of solution path $\theta(t)$)}
		Assume that $f(\cdot,\cdot)$ is convex and differentiable and that
		$\|G(\theta;\xi,t)\| \le M$. Then for stochastic gradient scheme
		(\ref{sgd_scheme}), with a finite partition for $t$ between [0,1], we have 		
		\begin{equation}
			\lim_{n\rightarrow \infty} \mathbb{E}[f(\bar{\theta}_n,
			\bar{t}_n)] = f(\theta_*^1, 1),
		\end{equation}
		where $\bar{\theta}_n =
		\frac{\sum_{k=0}^n\gamma_k\theta_k}{\sum_{k=0}^n\gamma_k}$ and $\bar{t}_n =
		\frac{\sum_{k=0}^n\gamma_k t_k}{\sum_{k=0}^n\gamma_k}$.
	\end{theorem}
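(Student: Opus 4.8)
The plan is to adapt the classical analysis of averaged stochastic gradient descent for convex objectives (the argument behind the convex results of \cite{BCN}) to the situation where the homotopy parameter $t_k$ drifts towards $1$. First I would record the basic one–step recursion. Fixing the reference point $\theta_*^1$, writing $\mathcal{F}_k$ for the history up to step $k$, expanding $\|\theta_{k+1}-\theta_*^1\|^2$ from the scheme \eqref{sgd_scheme}, and taking conditional expectation gives
\[
\mathbb{E}\big[\|\theta_{k+1}-\theta_*^1\|^2\mid\mathcal{F}_k\big]\le \|\theta_k-\theta_*^1\|^2 - 2\gamma_k\big\langle \nabla f(\theta_k;t_k),\theta_k-\theta_*^1\big\rangle + \gamma_k^2 M^2,
\]
where I use the unbiasedness $\mathbb{E}[G(\theta_k;\xi_k,t_k)\mid\mathcal{F}_k]=\nabla f(\theta_k;t_k)$ (already verified in the proof of Theorem~\ref{nonconvex}) and the hypothesis $\|G\|\le M$. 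Convexity of $f(\cdot;t_k)$ then yields $\langle\nabla f(\theta_k;t_k),\theta_k-\theta_*^1\rangle\ge f(\theta_k;t_k)-f(\theta_*^1;t_k)$.

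Next I would take total expectations, telescope from $k=0$ to $n$, and drop the nonnegative term $\mathbb{E}\|\theta_{n+1}-\theta_*^1\|^2$ to get
\[
2\sum_{k=0}^{n}\gamma_k\big(\mathbb{E}[f(\theta_k;t_k)]-f(\theta_*^1;t_k)\big)\le \|\theta_0-\theta_*^1\|^2 + M^2\sum_{k=0}^{n}\gamma_k^2 .
\]
Dividing by $2A_n$ and invoking the joint convexity of $f$ via Jensen's inequality, $f(\bar\theta_n,\bar t_n)\le \tfrac{1}{A_n}\sum_k\gamma_k f(\theta_k;t_k)$, gives
\[
\mathbb{E}[f(\bar\theta_n,\bar t_n)]\le \frac{1}{A_n}\sum_{k=0}^n\gamma_k f(\theta_*^1;t_k) + \frac{\|\theta_0-\theta_*^1\|^2+M^2\sum_{k=0}^n\gamma_k^2}{2A_n}.
\]
Under the step–size conditions $\sum\gamma_k=\infty,\ \sum\gamma_k^2<\infty$ of the SGD scheme, the last term vanishes as $n\to\infty$. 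For the first term I would use that $t$ runs over a finite partition of $[0,1]$ with $t_k\nearrow1$, so $t_k=1$ for all $k$ beyond some finite $K_0$ (more generally it suffices that $t_k\to1$ and that $t\mapsto f(\theta_*^1;t)$ is continuous, which holds since $H_i$ is affine in $t$ and $\mathcal{L}$ is $C^1$); a Cesàro/Toeplitz argument then gives $\tfrac1{A_n}\sum_k\gamma_k f(\theta_*^1;t_k)\to f(\theta_*^1,1)$, hence $\limsup_n\mathbb{E}[f(\bar\theta_n,\bar t_n)]\le f(\theta_*^1,1)$.

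For the matching lower bound I would split $f(\bar\theta_n,\bar t_n)=f(\bar\theta_n,1)+\big(f(\bar\theta_n,\bar t_n)-f(\bar\theta_n,1)\big)$: the first summand is $\ge f(\theta_*^1,1)$ because $\theta_*^1$ minimizes $f(\cdot,1)$, the correction term is controlled by the Lipschitz dependence of $f$ on $t$ (again from affineness of $H_i$ in $t$ together with the boundedness of $\nabla_x\mathcal{L}$ established in the proof of Theorem~\ref{nonconvex}), and $\bar t_n\to1$ by the same Toeplitz argument applied to $t_k\to1$; this yields $\liminf_n\mathbb{E}[f(\bar\theta_n,\bar t_n)]\ge f(\theta_*^1,1)$, and combining the two bounds proves the claim. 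The main obstacle I anticipate is precisely this coupling between the moving parameter $t_k$ and the SGD averaging: one must show the perturbation caused by $t_k\neq1$ — entering both through the reference values $f(\theta_*^1;t_k)$ and through $\bar t_n\neq1$ — is asymptotically negligible against $A_n\to\infty$. With a finite partition this is clean since that perturbation is confined to finitely many early iterates; the only genuine regularity input needed beyond the stated hypotheses is continuity/Lipschitzness of $f$ in $t$, which is supplied by the explicit affine-in-$t$ form of $H_i$ and the $\mathcal{L}$-bounds from Theorem~\ref{nonconvex} (one also implicitly needs the iterates to stay in a region where $\partial_t H_i$ is bounded, e.g.\ via a boundedness or projection assumption as in Theorem~\ref{nonconvex}).
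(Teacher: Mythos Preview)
Your argument is correct and is actually a bit cleaner than the paper's, but it proceeds by a genuinely different decomposition. The paper anchors the recursion at the \emph{moving} minimizer $\theta_*^{t_k}$: it expands $\mathbb{E}\|\theta_{k+1}-\theta_*^{t_{k+1}}\|^2$, which produces an extra cross term
\[
A_k=-2\mathbb{E}\big\langle \theta_k-\gamma_kG(\theta_k;\xi_k,t_k)-\theta_*^{t_k},\,\theta_*^{t_{k+1}}-\theta_*^{t_k}\big\rangle+\mathbb{E}\|\theta_*^{t_{k+1}}-\theta_*^{t_k}\|^2,
\]
and then uses the finite-partition hypothesis to argue $\sum_k A_k<\infty$. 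After that the two proofs rejoin: convexity of $f(\cdot,t_k)$, telescoping, Jensen with the joint convexity of $f$, and a Ces\`aro limit. Your choice of the \emph{fixed} anchor $\theta_*^1$ eliminates the $A_k$ bookkeeping entirely; the price is that the sequence being Ces\`aro-averaged on the right is $f(\theta_*^1;t_k)$ rather than the optimal values $f(\theta_*^{t_k};t_k)$, but this is in fact easier to pass to the limit (only continuity of $t\mapsto f(\theta_*^1;t)$ is needed, not continuity of the optimal-value map). Your lower-bound step, splitting $f(\bar\theta_n,\bar t_n)=f(\bar\theta_n,1)+\big(f(\bar\theta_n,\bar t_n)-f(\bar\theta_n,1)\big)$ and invoking Lipschitz dependence on $t$, is also more explicit than the paper's, which simply writes $\mathbb{E}[f(\lim_n\bar\theta_n,1)]$ and compares it to $f(\theta_*^1,1)$ without justifying the interchange of limit and expectation or the existence of $\lim_n\bar\theta_n$. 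The residual assumption you flag --- boundedness of the iterates so that $\partial_t H_i$ stays bounded --- is the same implicit regularity the paper relies on; neither proof closes that gap from the stated hypotheses alone.
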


	\begin{proof}
		\begin{eqnarray}
		\mathbb{E}[\|\theta_{k+1}-\theta_*^{t_{k+1}}\|^2] &=&\mathbb{E}[\|\theta_k-\gamma_kG(\theta_k;\xi_k,t_k) -
			\theta_*^{t_k}\|^2] \nonumber\\ &\quad&
		- 2\mathbb{E}[\langle
			\theta_k - \gamma_kG(\theta_k;\xi_k,t_k) - \theta_*^{t_k},
			\theta_*^{t_{k+1}} - \theta_*^{t_k}\rangle]\nnu\\&\quad &
			+\mathbb{E}[\|\theta_*^{t_{k+1}} - \theta_*^{t_k}\|^2].
		\end{eqnarray}
		By defining
		\begin{equation}
			\begin{aligned}
				A_k=-2\mathbb{E}[\langle
				\theta_k-\gamma_kG(\theta_k;\xi_k,t_k) - \theta_*^{t_k} ,
				\theta_*^{t_{k+1}} - \theta_*^{t_k}\rangle] +
				\mathbb{E}[\|\theta_*^{t_{k+1}}-\theta_*^{t_k}\|^2],
			\end{aligned}
		\end{equation}
		we have $\sum_{k=0}^nA_k\le A<\infty$ since $t\in[0,1]$ has a finite partition. Therefore,  we obtain
		\begin{eqnarray}
		\mathbb{E}[\|\theta_{k+1} - \theta_*^{t_{k+1}}\|^2] \nonumber
			&=&\mathbb{E}[\|\theta_k - \gamma_kG(\theta_k;\xi_k,t_k) -
			\theta_*^{t_k}\|^2] + A_k \nonumber\\
			&=&\mathbb{E}[\|\theta_k-\theta_*^{t_k}\|^2] - 2\gamma_k \mathbb{E}
			[\langle G(\theta_k;\xi_k,t_k),\theta_k-\theta_*^{t_k}\rangle]
			+\gamma_k^2\mathbb{E}[\|G(\theta_k;\xi_k,t_k)\|^2] +
			A_k\nnu\\ &\le&\mathbb{E}[\|\theta_k-\theta_*^{t_k}\|^2] - 2\gamma_k
			\mathbb{E} [\langle G(\theta_k;\xi_k,t_k),\theta_k -
			\theta_*^{t_k}\rangle] +\gamma_k^2M^2 + A_k\nonumber.
		\end{eqnarray}
		Since
		\begin{eqnarray}
		\mathbb{E}[\langle
			G(\theta_k;\xi_k,t_k),\theta_k-\theta_*^{t_k}\rangle]
			&=&\mathbb{E}_{\xi_0,\cdots,\xi_{k-1}}[\mathbb{E}_{\xi_k}[\langle
			G(\theta_k;\xi_k,t_k) , \theta_k - \theta_*^{t_k}\rangle |\xi_0,\cdots,\xi_{k-1}]] \nonumber\\
			&=&\mathbb{E}_{\xi_0,\cdots,\xi_{k-1}}[\langle \nabla f(\theta_k;t_k) ,
			\theta_k - \theta_*^{t_k}\rangle|\xi_0,\cdots,\xi_{k-1}] \nonumber\\
			&=&\mathbb{E}[\langle \nabla f(\theta_k;t_k) , \theta_k -
			\theta_*^{t_k}\rangle],
		\end{eqnarray}
		we have
		\begin{eqnarray}
		\mathbb{E}[\|\theta_{k+1} - \theta_*^{t_{k+1}}\|^2] \le\mathbb{E}[\|\theta_k-\theta_*^{t_k}\|^2] -
			2\gamma_k\mathbb{E} [\langle \nabla f(\theta_k;t_k),\theta_k -
			\theta_*^{t_k}\rangle] +\gamma_k^2M^2 + A_k.
		\end{eqnarray}
		Due to the convexity of $f(\cdot, t_k)$, namely,
		\begin{equation}
			\langle \nabla f(\theta_k, t_k), \theta_k-\theta_*^{t_k}\rangle \ge
			f(\theta_k; t_k) - f(\theta_*^{t_k}; t_k),
		\end{equation} we conclude that
		\begin{eqnarray}
		\mathbb{E}[\|\theta_{k+1} - \theta_*^{t_{k+1}}\|^2] \le
			\mathbb{E}[\|\theta_k-\theta_*^{t_k}\|^2]- 2\gamma_k
			\mathbb{E}[ f(\theta_k;t_k) - f(\theta_*^{t_k},t_k)]
			+\gamma_k^2M^2 + A_k, \quad \label{eqn24}
		\end{eqnarray}
		or
		\begin{eqnarray}
		2\gamma_k\mathbb{E}[f(\theta_k;t_k) - f(\theta_*^{t_k};t_k)]\le
		 -\mathbb{E}[\|\theta_{k+1} - \theta_*^{t_{k+1}}\|^2 -
			\|\theta_k - \theta_*^{t_k}\|^2] + \gamma_k^2M^2 +
			A_k.\nonumber
		\end{eqnarray}
		By summing up $k$ from 0 to n,
		\begin{eqnarray}
		2\sum_{k=0}^n\gamma_k\mathbb{E}[f(\theta_k;t_k) -
			f(\theta_*^{t_k};t_k)]  &\le& -\mathbb{E}[\|\theta_{n+1} -
			\theta_*^{t_{n+1}}\|^2 - \|\theta_0 - \theta_*^{0}\|^2]  + M^2\sum_{k=0}^n\gamma_k^2+ \sum_{k=0}^n A_k \nonumber \\ &\le& D^2+
			M^2\sum_{k=0}^n\gamma_k^2 + \sum_{k=0}^n A_k,
		\end{eqnarray}
		where $D = \|\theta_0 - \theta_*^0\|$.
		Dividing $2\sum_{k=0}^n\gamma_k$ on both sides, we have
		\begin{eqnarray}
		\frac{1}{\sum_{k=0}^n\gamma_k} \sum_{k=0}^n\gamma_k
			\mathbb{E}[f(\theta_k;t_k) - f(\theta_*^{t_k};t_k)] \le
			\frac{D^2+ M^2\sum_{k=0}^n\gamma_k^2 + \sum_{k=0}^n
			A_k}{2\sum_{k=0}^n\gamma_k} \le \frac{D^2+
			M^2\sum_{k=0}^n\gamma_k^2 + A}{2\sum_{k=0}^n\gamma_k}.\nonumber
		\end{eqnarray}
		According to the convexity of $f(\cdot;\cdot)$ and Jensen's
		inequality \cite{jensen1906fonctions},
		\begin{equation}
			\frac{1}{\sum_{k=0}^n\gamma_k} \sum_{k=0}^n\gamma_k
			\mathbb{E}[f(\theta_k;t_k)] \ge \mathbb{E}[f(\bar{\theta}_n;\bar{t}_n)],
		\end{equation}
		where $\bar{\theta}_n =
		\frac{\sum_{k=0}^n\gamma_k\theta_k}{\sum_{k=0}^n\gamma_k}$ and
		$\bar{t}_n = \frac{\sum_{k=0}^n\gamma_k
		t_k}{\sum_{k=0}^n\gamma_k}$.

		Then we have
		\begin{eqnarray}
			\mathbb{E}[f(\bar{\theta}_n;\bar{t}_n)] -
			\frac{\sum_{k=0}^n\gamma_k f(\theta_*^{t_k};t_k)}{\sum_{k=0}^n\gamma_k}
			\le \frac{D^2+ M^2\sum_{k=0}^n\gamma_k^2 +
			A}{2\sum_{k=0}^n\gamma_k}.
		\end{eqnarray}

		We choose $\gamma_k$ such that $\sum_{k=0}^n\gamma_k = \infty$
		and $\sum_{k=0}^n\gamma_k^2 < \infty$, for example, $\gamma_k =
		\frac{1}{k}$. Taking $n$ to infinite, we have
		\begin{equation}
			 \lim_{n\rightarrow \infty}\mathbb{E}[f(\bar{\theta}_n,
			\bar{t}_n)] - \lim_{n\rightarrow \infty}
			\frac{\sum_{k=0}^n\gamma_k f(\theta_*^{t_k},
			t_k)}{\sum_{k=0}^n\gamma_k}\le 0.
		\end{equation}
		Since $t_k\nearrow 1$, $f(\cdot,\cdot)$ is continuous and
		$\displaystyle\sum_{k=0}^n\gamma_k = \infty$, then we have
		\begin{equation}
			\lim_{n\rightarrow \infty}\frac{\sum_{k=0}^n\gamma_k
			f(\theta_*^{t_k}, t_k)}{\sum_{k=0}^n\gamma_k} = f(\theta_*^1,
			1),
		\end{equation}
		which implies that
		\begin{equation}
			\lim_{n\rightarrow \infty} \mathbb{E}[f(\bar{\theta}_n,
			\bar{t}_n)] \leq f(\theta_*^1, 1).
		\end{equation}
Since $\bar{t}_n\rightarrow 1$, we have \begin{equation}
			\mathbb{E}[f(\lim_{n\rightarrow \infty}\bar{\theta}_n,
			1)] \leq f(\theta_*^1, 1).
		\end{equation}
On the other hand, 	$\theta_*^1$ is the global minimum due to the convexity of $f$, and we have	$\displaystyle\mathbb{E}[f(\lim_{n\rightarrow \infty}\bar{\theta}_n,
			1)] \geq f(\theta_*^1, 1).$ Thus, $\displaystyle\mathbb{E}[f(\lim_{n\rightarrow \infty}\bar{\theta}_n,
			1)] = f(\theta_*^1, 1)$ holds.

	\end{proof}

\section{Numerical Results}
In this section, we demonstrate the efficiency and the feasibility of the HTA by comparing it with the traditional method, the stochastic gradient descent method. For both methods, we used the same hyper parameters, such as learning rate (0.05), batch size (128), and the number of epochs (380) on the same neural network for various problems. Due to the non-convexity of objective functions, both methods may get stuck at local optimas. We also ran the training process 15 times with different random initial guesses for both methods and reported the best results for each method.
\label{sec:experiments}
	\subsection{Function Approximations}
		{\bf Example 1 (Single hidden layer):} The first example we
		considered is using a single-hidden-layer connected
		neural network to approximate function
		\begin{equation}
			f(x) = \sin(x_1 + x_2 + \cdots + x_n),
		\end{equation}
		where $x = (x_1,x_2,\cdots,x_n)^T\in R^n$. The width of the
		single hidden layer NN is 20 and the width of the hidden layer
		of initial state of HTA is set to be 10.  Then the homotopy
		setup is written as \[H(x;\theta,t)=(1-t)y_1(x;\theta) + ty_2(x;\theta)\]
		where $y_1$ and $y_2$ are the fully connected NNs with 10 and
		20 as their width of hidden layers respectively. In particular, we have
		\begin{equation}
			y_{1}(x;\theta) = W_{21} \cdot r(W_{11} \cdot x + b_{11}) +
			b_{21},
		\end{equation}
		\begin{equation}
			y_{2}(x;\theta) = W_{2} \cdot r(W_{1} \cdot x + b_{1}) +
			b_{21},
		\end{equation}
		where $x \in \mathbb{R}^{n\times 1}$, $W_{11} \in
		\mathbb{R}^{10\times n}$, $b_{11}\in \mathbb{R}^{10\times 1}$,
		$W_{21}\in \mathbb{R}^{1\times 10}$, $b_{21}\in
		\mathbb{R}^{1\times 1}$,
		$W_{1}=\left( \begin{aligned}
			W_{11}\\
			W_{12}\\
		\end{aligned}\right) \in
		\mathbb{R}^{20\times n}$, $b_{1}=\left( \begin{aligned}
			b_{11}\\
			b_{12}\\
		\end{aligned}\right)\in \mathbb{R}^{20\times 1}$, and
		$W_{2}=\left(
			W_{21},
			W_{22}
		\right)\in \mathbb{R}^{1\times 20}$. We use the ReLU function
		$r(x)=\max\{0,x\}$ as our activation function.  For $n\leq 3$,
		we used the uniform grid points, where the sample points are
		the Cartesian products of uniformly sampled points of each
		dimension.  Then the size of the training data set is $10^{2n}$.
		For $n\geq 4$, we employed the sparse grid
		\cite{garcke2006sparse,smolyak1963quadrature} with level 6 as
		sample points. For each $n$, $90\%$ of the data set is used for
		training while $10\%$  is used for testing.  The loss
		curves of one-dimensional and two-dimensional cases are shown in Figs. \ref{sin_loss_1d}
		and \ref{sin_loss_2d}. By choosing $\Delta
		t=0.5$, the testing loss of HTA (for $t=1$) is lower than that
		of the traditional training algorithm.  Fig.
		\ref{sin_testing_plot} shows the comparison between the
		traditional method and HTA for for the one-dimensional case while Fig.
		\ref{sin_testing_plot_2d} shows the comparison of the two-dimensional case by
		using contour curves. All the results of up to $n=5$ are
		summarized in Table \ref{tab_testing_loss}, which lists the test
		loss between the HTA and the traditional training algorithm. It
		shows clearly that the HTA method is more efficient than the
		traditional method.

		{\bf Example 2 (multiple hidden layers):} The second example is
		using a two-hidden-layer fully connected neural network to
		approximate the same function in Example 1 for the multi-dimensional case. Since the approximation of the neutral
		network with a single hidden layer is not effective for $n>3$ (see
		Table \ref{tab_testing_loss}), we use a two-hidden-layer fully
		connected neural network with 20 nodes for each layer.  Then we
		use the following homotopy setup to increase the width of each
		layer from 10 to 20:
		\begin{equation}
			H_1(x;\theta,t) = (1-t)y_1(x;\theta) + ty_2(x;\theta),\quad			H_2(x;\theta,t) = (1-t)y_2(x;\theta) + ty_3(x;\theta),
		\end{equation}
		where $y_1$, $y_2$,  and $y_3$ represent neural networks with
		width (10,10), (10,20), and (20,20) respectively. The rationale is
		that the first homotopy function, $H_1(x;\theta,t)$, increases
		the width of the first layer while the second homotopy
		function, $H_2(x;\theta,t)$, increases the width of the second
		layer.
		The size of the training data and the strategy of choosing $\Delta
		t$ is the same as in Example 1.  Table \ref{tab_testing_loss_hd}
		shows the results of the approximation, and Fig.
		\ref{sin_testing_loss_hd} shows the testing curves  for $n=5$ and $n=6$. The HTA achieves higher accuracy than the traditional
		method.

	\subsection{Parameter Estimation}
		Parameter estimation often requires a tremendous number of model
		evaluations to obtain the solution information on the parameter
		space \cite{CMV15,Hpar,Goodman}. However, this large number of
		model evaluations becomes very difficult and even impossible for
		large-scale computational models \cite{hma:05,MS94}. Then a
		surrogate model needs to be built in order to approximate the
		parameter space. Neural networks provide  an effective way to build
		the surrogate model. But an efficient training algorithm of
		neural networks is needed to obtain an effective approximation
		especially for limited sample data on parameter space. We will
		use the Van der Pol equation as an example to illustrate the
		efficiency of HTA on the parameter estimation.

{\bf Example 3:}	We applied the HTA to estimate the
		parameters of the Van der Pol equation:
		\begin{equation}
			y'' - \mu(k - y^2)y' + y = 0 \hbox{~with~} y(0)=2 \hbox{~and ~} y'(0)=0.
		\end{equation}
 In order to estimate the parameters $\mu$ and $k$ for a given data $\tilde{y}(t;\mu,k)$, we first use single-hidden-layer fully connected neural network to build a surrogate model with  $\mu$ and $k$ as inputs and $y(1)$ as the output. Our training data set
	is chosen on $1\le \mu \le 10$ and $1\le k \le 10$ with 8,281 (with $0.1$ as the mesh size).  This neural network is trained by both the traditional method and the HTA. The testing dataset is 961 uniform grid points on $11\le \mu_i \le 14$ and $11\le k_i \le 14$ with $0.1$ as the mesh size for both $\mu$ and $k$
 The testing loss curves of the traditional method and HTA are shown in Fig. \ref{vdp}: after $5\times10^4$ steps, the testing loss is $0.007$ for HTA and $0.220$ for the traditional method. We also compared these two surrogate models (traditional method and HTA) with the numerical ODE solution $y(1;\mu,k)$ in Fig. \ref{vdp_equation}. This comparison shows that the approximation of the HTA is closer to the ODE model than the traditional method.

Once we built surrogate models, then we moved to a parameter estimation step for any given data $\tilde{y}(t;\mu,k)$ to solve the following optimization problem
		\begin{equation}
\min_{\mu,k} (S(\mu,k)-\tilde{y}(1))^2,\label{PE}
		\end{equation}
where $S(\mu,k)$ is the surrogate neural network model and $\tilde{y}(1)$ is the data when $t=1$. In our example, we generated ``artificial data" on the testing dataset. We use the SDG to solve the optimization problem with $\rho=k=11$ as the initial guess. We define the error of the parameter estimation below:
		\begin{equation}
Err_{PE}=\frac{\sum_{i=1}^n \sqrt{(\mu^*_i-\mu_i)^2+(k^*_i-k_i)^2}}{n},
		\end{equation}
where $(\mu_i,k_i)$ is the sample point and $(\mu^*_i,k^*_i)$ is the optima of (\ref{PE}) for a given $\tilde{y}(1)$. Then the error of HTA is 0.71 while the error of the traditional method is 1.48. We also list some results of parameter estimation for different surrogate models in Table \ref{para}. The surrogate model created by the HTA provides smaller errors than the traditional method for parameter estimation.

\section{Applications to Computer Vision}
	Computer vision is one of the most common applications in the field of
	machine learning \cite{krizhevsky2012imagenet,rowley1998neural}. It has
	diverse applications, from designing navigation systems for
	self-driving cars \cite{bojarski2016end} to counting the number of
	people in a crowd \cite{chan2008privacy}. There are many different
	models that can be used for detection and classification of objects.
	Since our algorithm focuses on the fully connected neural networks, we
	will only apply our algorithm to computer vision models with fully
	connected neural  networks. Therefore, in this section, we will use
	different Visual Geometry Group (VGG) models \cite{simonyan2014very} as
	an example to illustrate the application of HTA to computer vision. In
	computer vision, the VGG models use convolutional layers to extract
	features of the input picture, and then flatten the output tensor to be
	a 512-dimension-long vector. The output long vector will be sent into
	the fully connected network (See Fig. \ref{VGG} for more details). In
	order to demonstrate the efficiency of HTA, we will apply it to the
	fully connected network part of the VGG models.  \subsection{Three
	States of HTA} The last stage of VGG models is a fully connected neural
	network that links convolutional layers of VGG to the classification
	categories of the Canadian Institute for Advanced Research (CIFAR-10)
	\cite{krizhevsky2009learning}. Then the input is the long vector
	generated by convolutional layers (the width is 512), while the output
	is the 10 classification categories of CIFAR-10. In order to train this
	fully connected neural network, we construct  a three states setup of
	HTA, which is shown in  Fig. \ref{3states}. In this section, we use $x$
	to represent the inputs generated by the convolutional layers ($x\in
	R^{512}$), and $\theta$ to represent the parameters for each state. The
	size of $\theta$ may change for different states.
	\begin{itemize}
		\item {\bf State 1:} For the first state, we construct a fully
			connected network with 2 hidden layers. The width of the $i$-th hidden
			layer is set to be $w_i$. Then it can be written as
			\begin{equation}
				y_{1}(x;\theta) = W_{31} \cdot r(W_{21} \cdot r(W_{11} \cdot x + b_{11}) +
				b_{21}) + b_{3},
			\end{equation}
			where $x \in \mathbb{R}^{512\times 1}$, $W_{11} \in \mathbb{R}^{w_1\times
			512}$, $b_{11}\in \mathbb{R}^{w_1\times 1}$, $W_{21}\in \mathbb{R}^{w_2\times
			w_1}$, $b_{21}\in \mathbb{R}^{w_2\times 1}$, $W_{31}\in \mathbb{R}^{10\times
			w_2}$, and $b_{3}\in \mathbb{R}^{10\times 1}$.  We use the ReLU function
			$r(x)=\max\{0,x\}$ as our activation function.

		\item {\bf State 2:} For the second state, we add $(512-w_1)$ nodes to the first hidden
				layer to recover the first hidden layer of the original model.
				Therefore, the formula of the second state becomes
				\begin{equation}
					y_{2}(x;\theta) = W_{31} \cdot r(\tilde{W}_{2} \cdot r(W_{1} \cdot
					x + b_{1}) + b_{21}) + b_{3},
				\end{equation}
				where $x \in \mathbb{R}^{512\times 1}$,
				$W_1 =\left( \begin{aligned}
					W_{11}\\
					W_{12}\\
				\end{aligned}\right) \in \mathbb{R}^{512\times 512}$,
				$b_1 =\left( \begin{aligned}
					b_{11}\\
					b_{12}\\
				\end{aligned}\right) \in \mathbb{R}^{512\times 1}$, and
				$\tilde{W}_2 =\left(
					W_{21},
					W_{22}
				\right) \in \mathbb{R}^{w_2\times 512}$. In particular, if we choose $W_{12}=0$ and $W_{22}=0$, we will recover $y_1(x;\theta)$ of state 1.

		\item {\bf State 3:} Finally, we recover the original structure of the VGG by adding $(512-w_2)$ nodes to the second hidden layer:
			\begin{equation}
				y_{3}(x; \theta) = W_{3} \cdot r(W_{2} \cdot r(W_{1} \cdot x +
				b_{1}) + b_{2}) + b_{3},
			\end{equation}

			where
			$W_3 =\left( \begin{aligned}
				W_{31},
				W_{32}
			\end{aligned}\right) \in \mathbb{R}^{10\times 512}$,
			$W_2 =\left( \begin{aligned}
				\tilde{W}_{2}\\
				\tilde{\tilde{W}}_{2}\\
			\end{aligned}\right) \in \mathbb{R}^{512\times 512}$,
			$b_2 =\left( \begin{aligned}
				b_{21}\\
				b_{22}\\
			\end{aligned}\right) \in \mathbb{R}^{512\times 1}$. $y_3(x;\theta)$ will be reduced to $y_2(x;\theta)$ if $\tilde{\tilde{W}}_{2}=0$ and $W_{32}=0$.
	\end{itemize}
	\subsection{Homotopic Path}
		In order to connect these three states, we use two homotopic paths
		thats are defined by the following homotopy functions: \begin{equation}
		H_{i}(x; \theta,t) = (1-t)y_{i}(x; \theta) + ty_{i+1}(x;
		\theta),~i=1,2.  \end{equation} In this homotopy setup, when $t=0$,  we
		already have an optimal solution $\theta_i$ for $i$-th state and want to
		find an optimal solution for $\theta$ for $i+1$-th state when $t=1$. By
		tracking $t$ from 0 to 1, we can discover a solution path $\theta(t)$
		since $y_{i}(x; \theta)$ is a special form of $y_{i+1}(x;
		\theta)$. Then the loss functions for the homotopy setup becomes \begin{equation}
		L_{i}(x, y; \theta, t) = L(H_i(x; \theta, t), y), \end{equation} where
		$L(x, y)$ is the loss function.
	\subsection{Training Process}
		We first optimize $L(y_{1}(x; \theta_1), y)$ for the first
		state. The model structure is relatively simple to solve, and
		it efficiently obtains a local minimum or even a global minimum
		for the loss function.
		Then the second setup is to optimize $L_{1}(x, y; \theta, t)$
		by using $\theta_1$ as an initial condition for $t=0$. By
		gradually tracking parameter $t$ to $1$, we obtain an optimal
		solution $\theta_2$ of $L(y_{2}(x; \theta), y)$. The
		third setup is to optimize $L_{2}(x, y; \theta, t)$ by tracking
		$t$ from $0$ ($\theta_2$) to $1$. Then we obtain an optimal
		solution, $\theta_3$, of $L(y_{3}(x;
		\theta), y)$.  Due to the continuous paths, the
		optimal solution $\theta_{i+1}$ of $i+1$-th state is connected to
		$\theta_i$ of the $i$-th state by the parameter $t$. In this way, we
		can build our complex network adaptively.
	\subsection{Numerical Results on CIFAR-10}
		We tested the HTA with the three-state setup on the CIFAR-10
		dataset. We used VGG11, VGG13, VGG16, and VGG19 with batch
		normalization \cite{simonyan2014very} as our base models.  Fig.
		\ref{vgg13_bn} shows the comparison of validation loss between
		HTA and the traditional method on the VGG13 model. 
		Using the HTA with VGG13 has a lower error rate (5.14\%) than the traditional
		method (5.82\%), showing that the HTA with VGG13 improves the error rate by
		11.68\%. All of the results for the different models are shown in Table
		\ref{vgg_result}. It is clearly seen that the HTA is more accurate than the
		traditional method for all of the different models. For example, the HTA with
		VGG11 results in an error rate of 7.02\% while the traditional method results
		in 7.83\% (an improvement of 10.34\%). In addition, the HTA with VGG19 has an
		error rate of 5.88\% compared to 6.35\% with the traditional model (an
		improvement of 7.40\%), and the HTA with VGG16 has an error rate of 5.71\%
		while the traditional method has an error rate of 6.14\% (a 7.00\%
		improvement).

	\subsection{The Optimal Structure of a Fully Connected Neural Network}
		\label{sec:alg}
		Since the HTA builds the fully connected neural network
		adaptively, it also provides a way for us to find the optimal
		structure of the fully connected neural network; for example,
		we can find the number of layers and the width for each layer.
		We designed an algorithm to find the optimal structure based on
		HTA. First, we began with a  minimal model; for example, in the
		VGG models of CIFAR-10, the minimal width of two hidden layers
		is 10 because of the 10 classification. Then we applied the HTA
		to the first hidden layer by adding ``node-by-node," and we
		optimized the loss function dynamically with respect to the
		homotopy parameter. If the optimal width of the first hidden
		layer was found, then the weights of new added nodes were close
		to zero after optimization. Then we moved to the second hidden
		layer and implemented the same process to train
		``layer-by-layer." When the weights of the new added nodes for
		the second hidden layer became close to zero, we terminated the
		process. 

		{\bf Numerical results on CIFAR-10:} When we applied the
		algorithm for finding the optimal structure to each VGG base
		model, we found that the results were more accurate than when
		we used the base model only. For VGG11 with batch
		normalization, we set $\delta t=1/2$ and $n_{epoch}=50$ and
		found the optimal structure whose widths of the first and
		second hidden layers are 480 and 20, respectively. The error
		rate with VGG11 was reduced to 7.37\% while the error rate of
		the base model was 7.83\%. In this way, our algorithm can reach
		higher accuracy but with a simpler structure. The rest of our
		experimental results for different VGG models are listed in
		Table \ref{BSF}.

\section{Conclusion}
	In this paper, we developed a homotopy training algorithm for the fully
	connected neural network models. This algorithm starts from a simple
	neural network and adaptively grows into a fully connected neural
	network with a complex structure. Then the complex neural network can
	be trained by the HTA to attain a higher accuracy. The convergence of
	the HTA for each $t$ is proved for the non-convex optimization that
	arises from fully connected neural networks  with a
	$C^1$ activation function. Then the existence of solution path
	$\theta(t)$ is demonstrated theoretically for the convex case although
	it exists numerically in the non-convex case. Several numerical
	examples have been used to demonstrate the efficiency and feasibility
	of HTA. We also proved the convergence of HTA to the local optima if
	the optimization problem is convex. The application of HTA to computer
	vision, using the fully connected part of VGG models on CIFAR-10,
	provides better accuracy than the traditional method. Moreover, the HTA
	method provides an alternative way to find the optimal structure to
	reduce the complexity of a neural network. In this paper, we developed
	the HTA for fully connected neural networks only, but we vision it as
	the first step in the  development of HTA for general neural networks.
	In the future, we will design a new way to apply it to more complex
	neural networks such as CNN and RNN so that the HTA can speed up the
	training process more efficiently. Since the
	structures of the CNNs and RNNs are very different from fully connected
	neural networks, we need to redesign the homotopy objective function in
	order to incorporate their structures, for instance, by including the
	dropout technique.

\section{Figures \& Tables}

The output for figure is:

	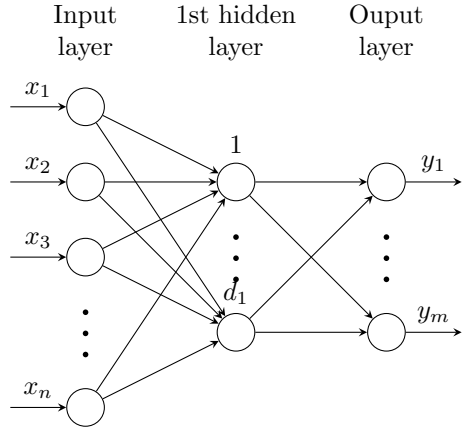
\begin{figure}
		\begin{center}
		\tikzset{%
		  every neuron/.style={
			circle,
			draw,
			minimum size=0.5cm
		  },
		  neuron missing/.style={
			draw=none,
			scale=2,
			text height=0.333cm,
			execute at begin node=\color{black}$\vdots$
		  },
		}
		\begin{tikzpicture}[x=1cm, y=1cm, >=stealth]

		\foreach \m/\l [count=\y] in {1,2,3,missing,4}
		  \node [every neuron/.try, neuron \m/.try] (input-\m) at (0,2.5-\y) {};

		\foreach \m [count=\y] in {1,missing,2}
		  \node [every neuron/.try, neuron \m/.try ] (hidden1-\m) at (2,1.5-\y) {};

		\foreach \m [count=\y] in {1,missing,2}
		  \node [every neuron/.try, neuron \m/.try ] (output-\m) at (4,1.5-\y) {};

			\foreach \l [count=\i] in {1,2,3,n}
		  \draw [<-] (input-\i) -- ++(-1,0)
			node [above, midway] {$x_{\l}$};

		\foreach \l [count=\i] in {1,d_1}
			\node [above] at (hidden1-\i.north) {$\l$};

		\foreach \l [count=\i] in {1,m}
		  \draw [->] (output-\i) -- ++(1,0)
			node [above, midway] {$y_{\l}$};

		\foreach \i in {1,...,4}
		  \foreach \j in {1,...,2}
			\draw [->] (input-\i) -- (hidden1-\j);

		\foreach \i in {1,...,2}
		  \foreach \j in {1,...,2}
			\draw [->] (hidden1-\i) -- (output-\j);

		\foreach \l [count=\x from 0] in {Input, 1st hidden, Ouput}
		  \node [align=center, above] at (\x*2,2) {\l \\ layer};
		\end{tikzpicture}
		\end{center}
		\caption{The structure of a neural network with a single hidden layer.}\label{Fig:SL}
	\end{figure}

	\begin{figure}
		\begin{center}
		\tikzset{%
		  every neuron/.style={
			circle,
			draw,
			minimum size=0.5cm
		  },
		  neuron missing/.style={
			draw=none,
			scale=2,
			text height=0.333cm,
			execute at begin node=\color{black}$\vdots$
		  },
		}

		\begin{tikzpicture}[x=1cm, y=1cm, >=stealth]

		\foreach \m/\l [count=\y] in {1,2,3,missing,4}
		  \node [every neuron/.try, neuron \m/.try] (input-\m) at (0,2.5-\y) {};

		\foreach \m [count=\y] in {1,missing,2}
		  \node [every neuron/.try, neuron \m/.try ] (hidden1-\m) at (2,1.5-\y) {};

		\foreach \m [count=\y] in {1,missing,2}
		  \node [every neuron/.try, neuron \m/.try ] (hidden2-\m) at (4,1.5-\y) {};

		\foreach \m [count=\y] in {1,missing,2}
		  \node [every neuron/.try, neuron \m/.try ] (output-\m) at (6,1.5-\y) {};

			\foreach \l [count=\i] in {1,2,3,n}
		  \draw [<-] (input-\i) -- ++(-1,0)
			node [above, midway] {$x_{\l}$};

		\foreach \l [count=\i] in {1,d_1}
			\node [above] at (hidden1-\i.north) {$\l$};

		\foreach \l [count=\i] in {1,d_2}
			\node [above] at (hidden2-\i.north) {$\l$};

		\foreach \l [count=\i] in {1,m}
		  \draw [->] (output-\i) -- ++(1,0)
			node [above, midway] {$y_{\l}$};

		\foreach \i in {1,...,4}
		  \foreach \j in {1,...,2}
			\draw [->] (input-\i) -- (hidden1-\j);

		\foreach \i in {1,...,2}
		  \foreach \j in {1,...,2}
			\draw [->] (hidden1-\i) -- (hidden2-\j);

		\foreach \i in {1,...,2}
		  \foreach \j in {1,...,2}
			\draw [->] (hidden2-\i) -- (output-\j);

		\foreach \l [count=\x from 0] in {Input, 1st hidden, 2nd hidden, Ouput}
		  \node [align=center, above] at (\x*2,2) {\l \\ layer};
		\end{tikzpicture}
		\end{center}
		\caption{The structure of a neural network with two hidden layers.}\label{Fig:2L}
	\end{figure}
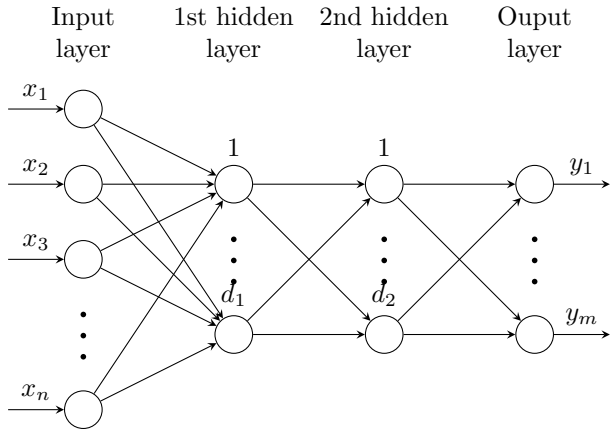

		\begin{figure*}[!t]
			\centering
			\subfigure[Training loss]
			{\includegraphics[width=3.5in, angle=0]{./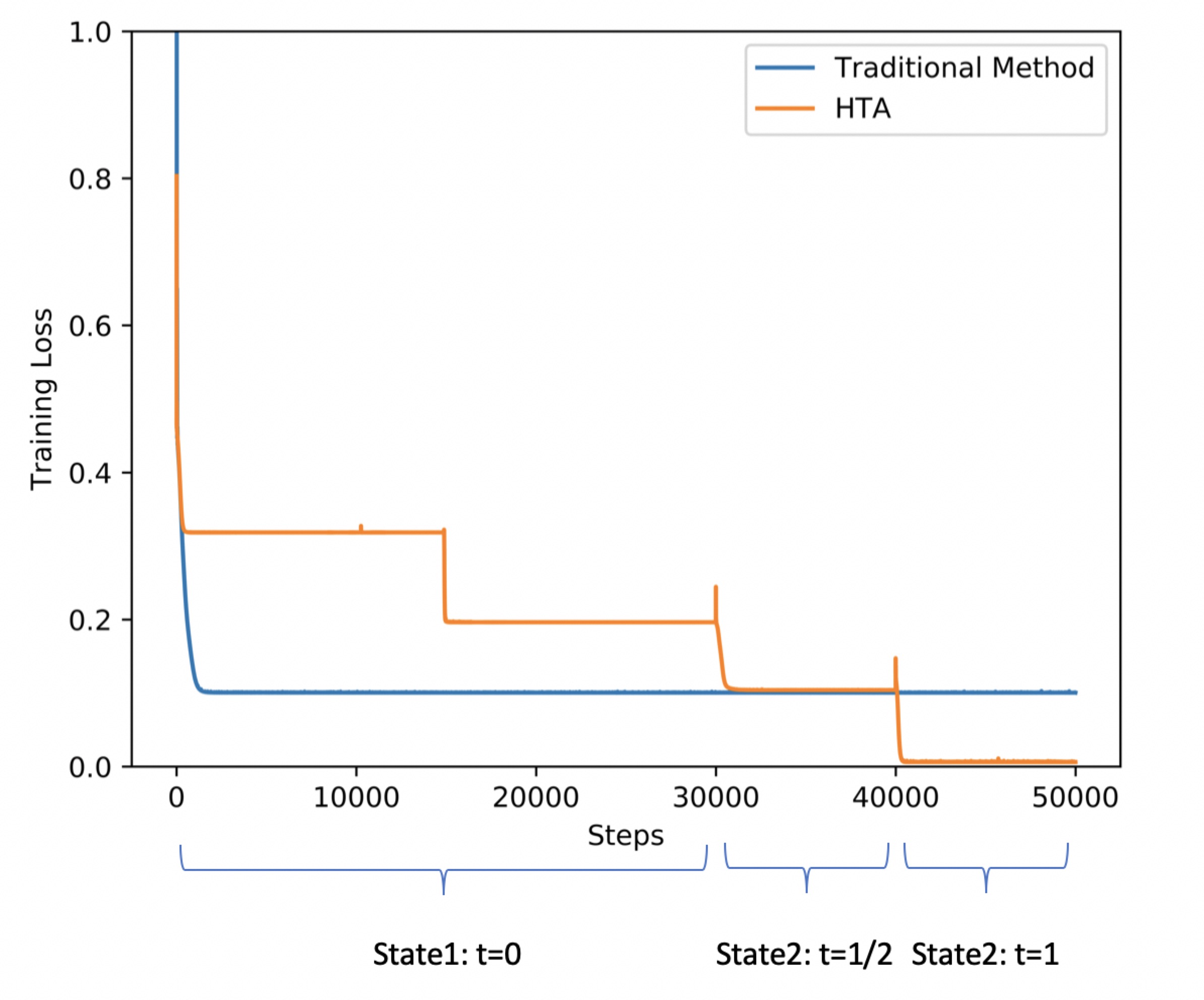}}
			\subfigure[Testing loss]
			{\includegraphics[width=3.5in, angle=0]{./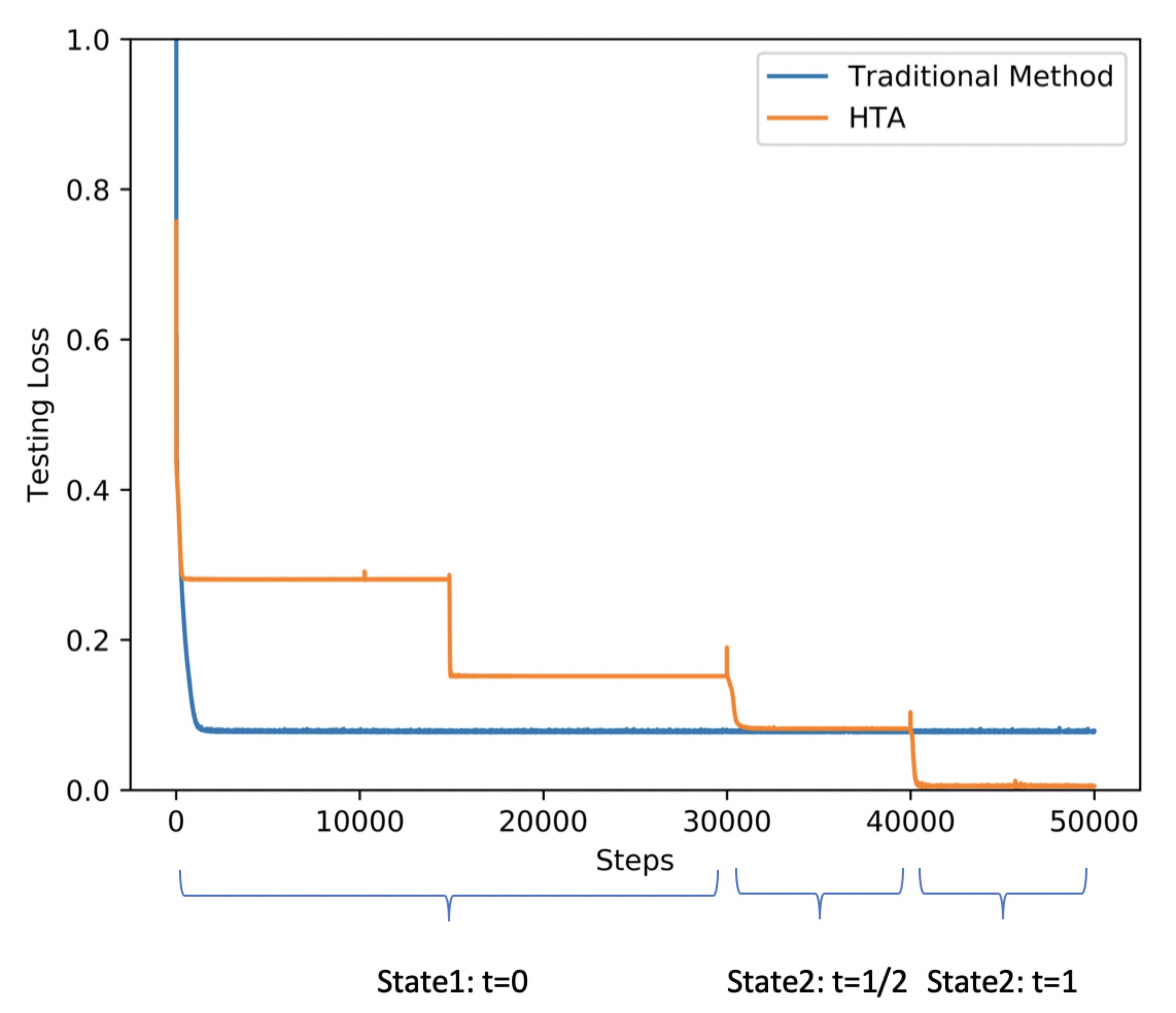}}
			\caption{Training/testing loss of approximating $\sin(x)$.}
			\label{sin_loss_1d}
		\end{figure*}
		\begin{figure*}[!t]
			\centering
			\subfigure[Training loss]
			{\includegraphics[width=3.5in, angle=0]{./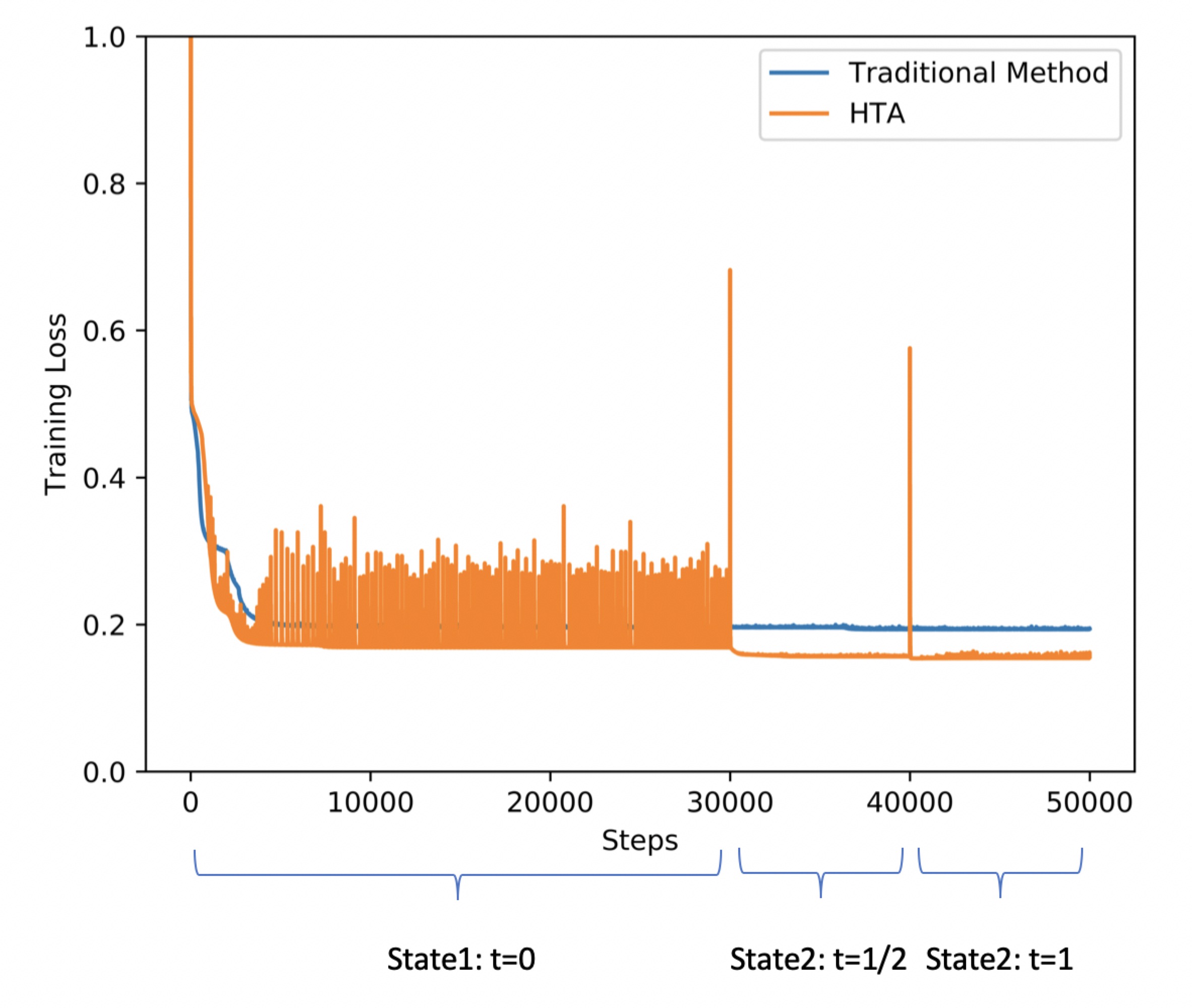}}
			\subfigure[Testing loss]
			{\includegraphics[width=3.5in, angle=0]{./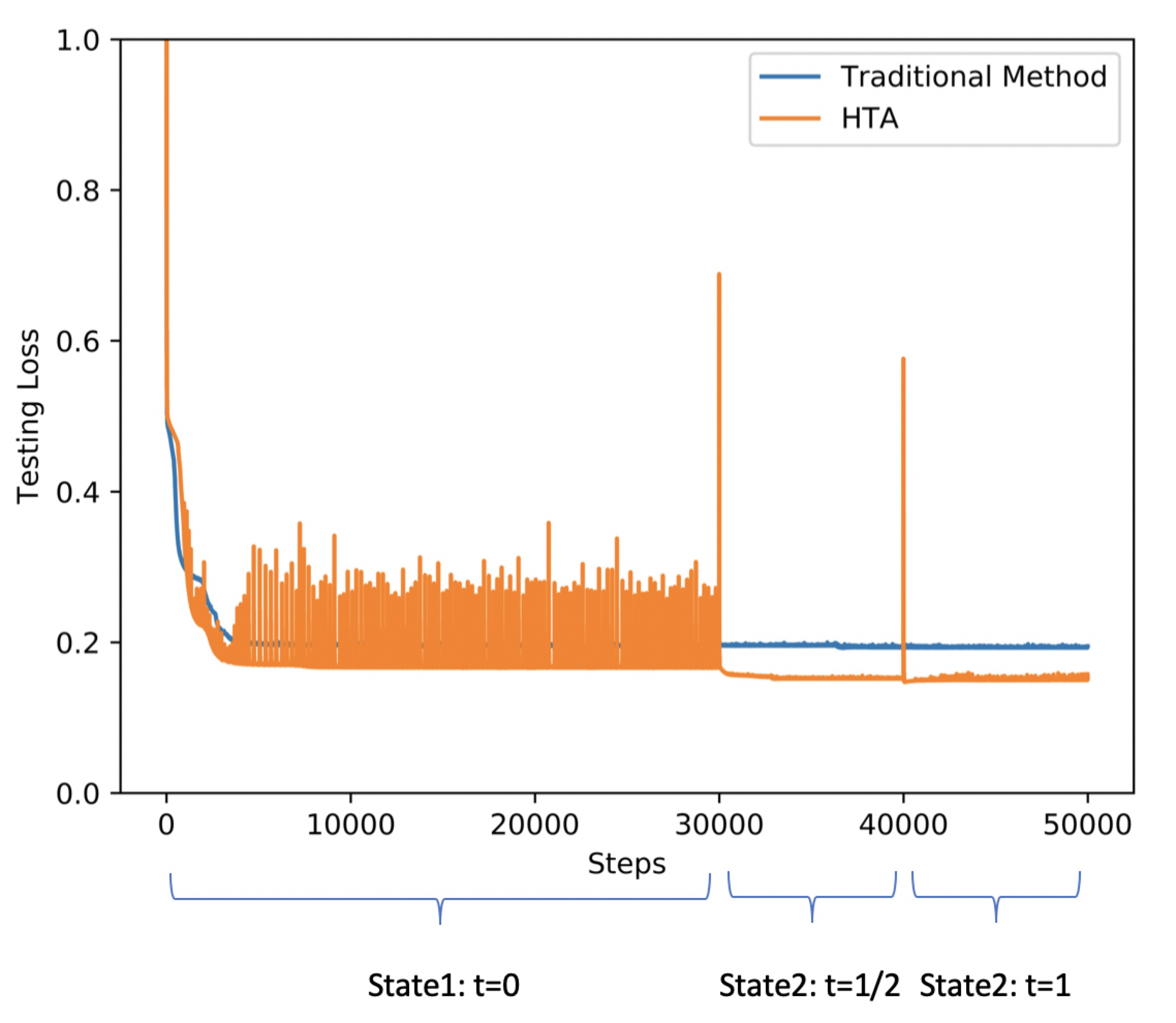}}
			\caption{Training/testing loss of approximating  $\sin(x_1 + x_2)$.}
			\label{sin_loss_2d}
		\end{figure*}

		\begin{figure*}[!t]
			\centering
			\subfigure[Approximation results of the traditional method]
			{\includegraphics[width=3.5in, angle=0]{./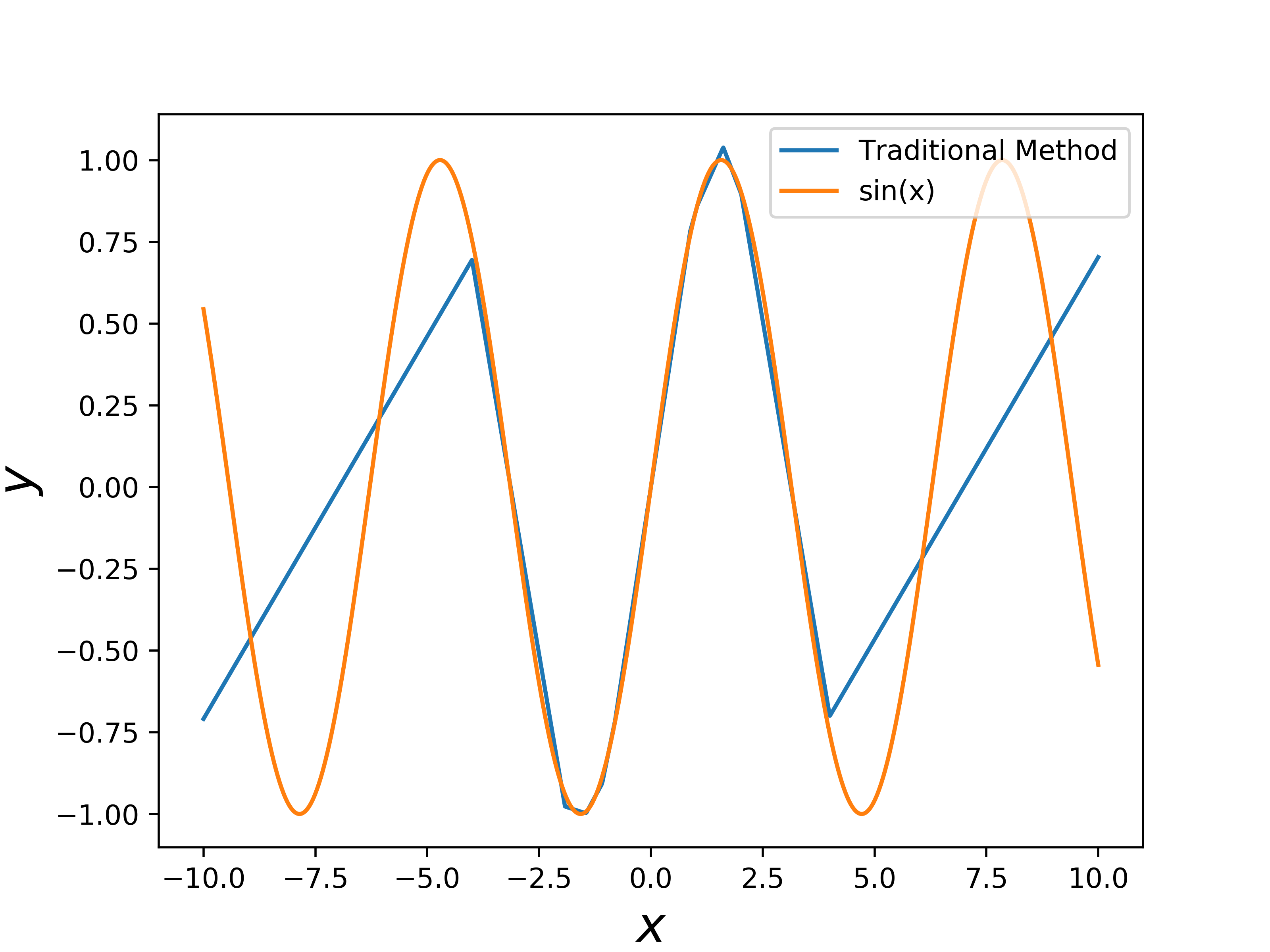}}
			\subfigure[Approximation results of the HTA]
			{\includegraphics[width=3.5in, angle=0]{./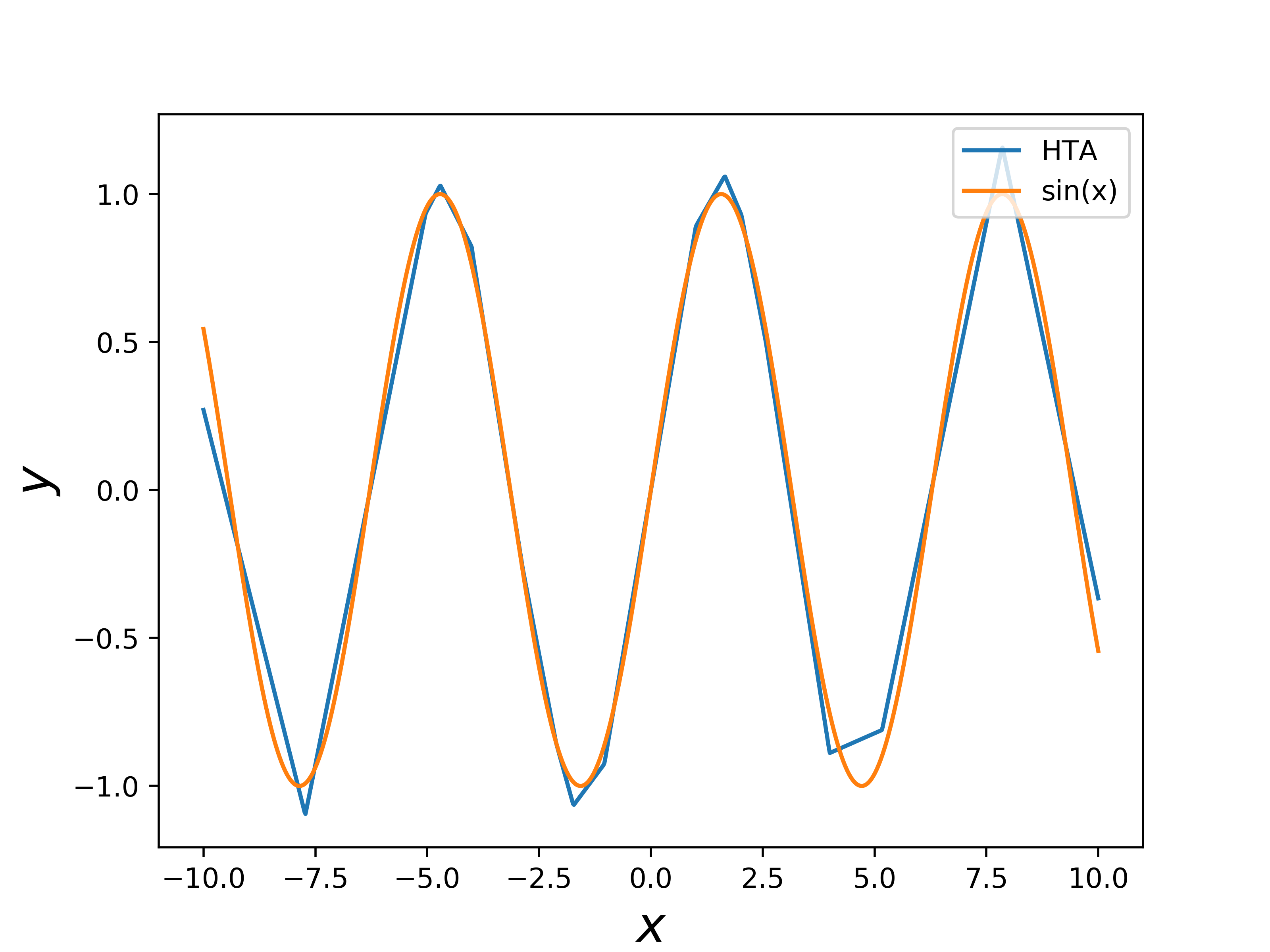}}
			\caption{Approximation results of $\sin(x)$.}
			\label{sin_testing_plot}
		\end{figure*}

		\begin{figure*}[!t]
			\centering
			\subfigure[Approximation results of the traditional method]
			{\includegraphics[width=3.5in, angle=0]{./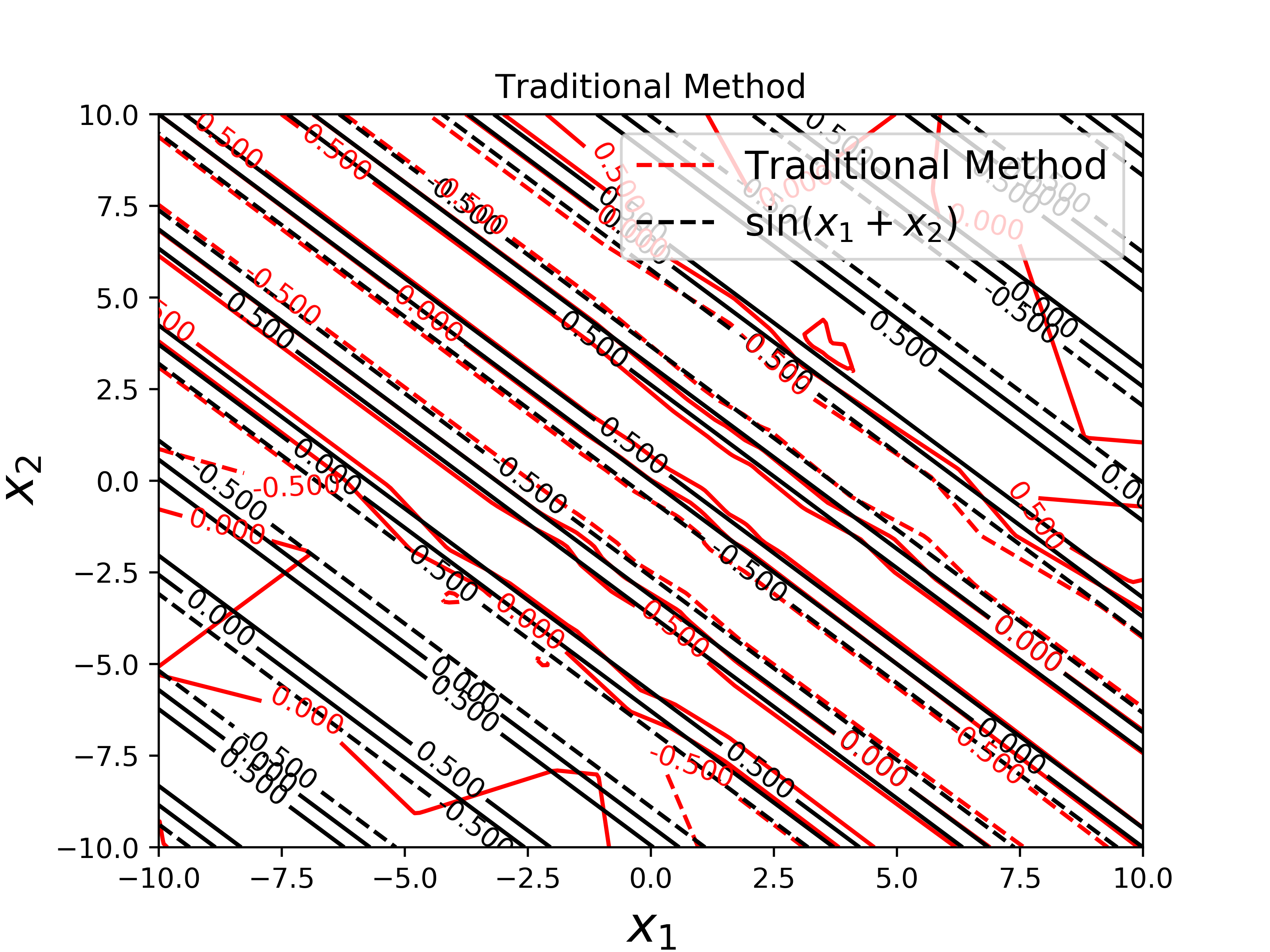}}
			\subfigure[Approximation results of the HTA]
			{\includegraphics[width=3.5in, angle=0]{./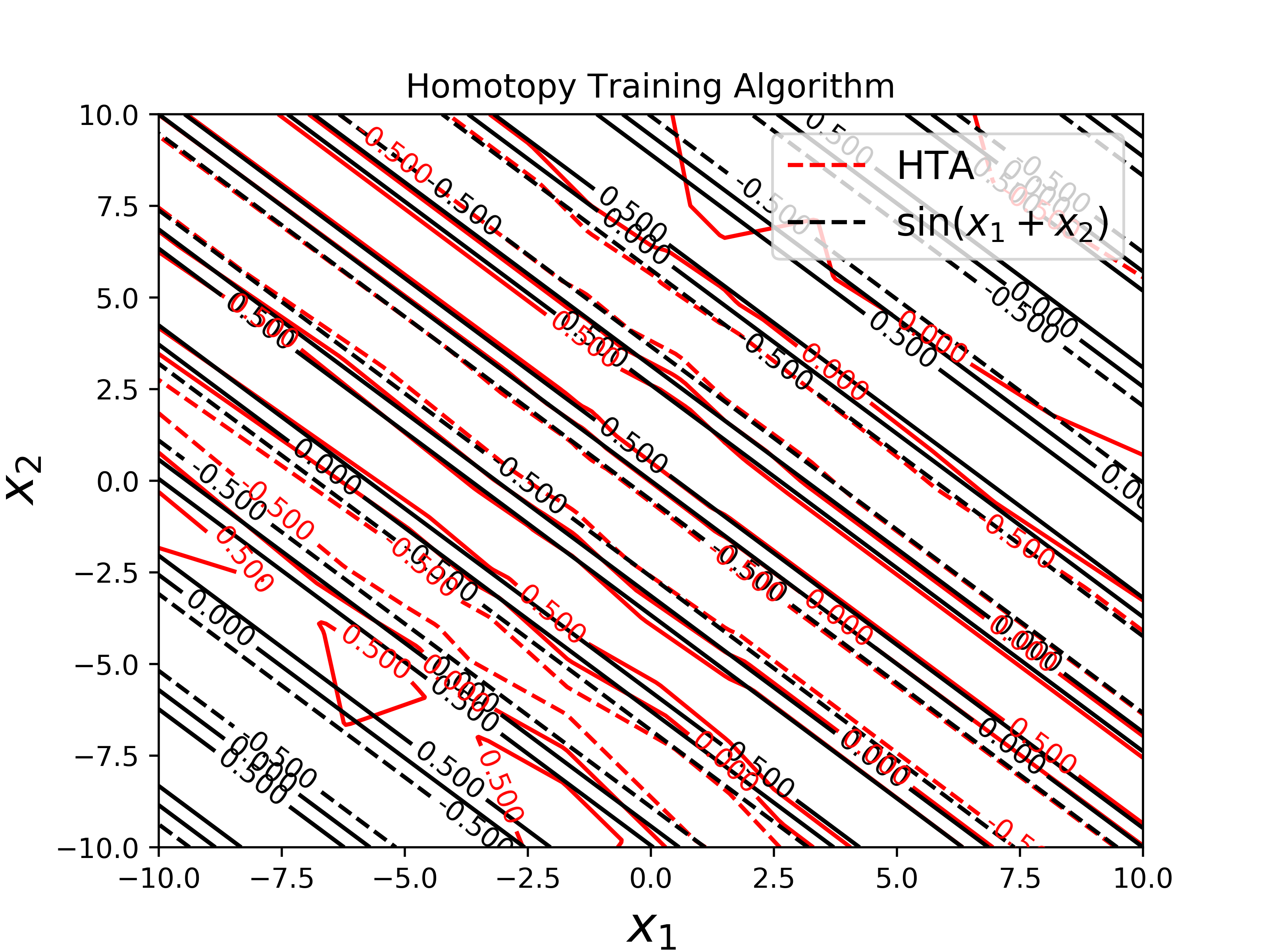}}
			\caption{Approximation results of $\sin(x_1 + x_2)$.}
			\label{sin_testing_plot_2d}
		\end{figure*}

		\begin{figure*}[!t]
			\centering
			\subfigure[Testing loss of $n=5$]
			{\includegraphics[width=3.5in, angle=0]{./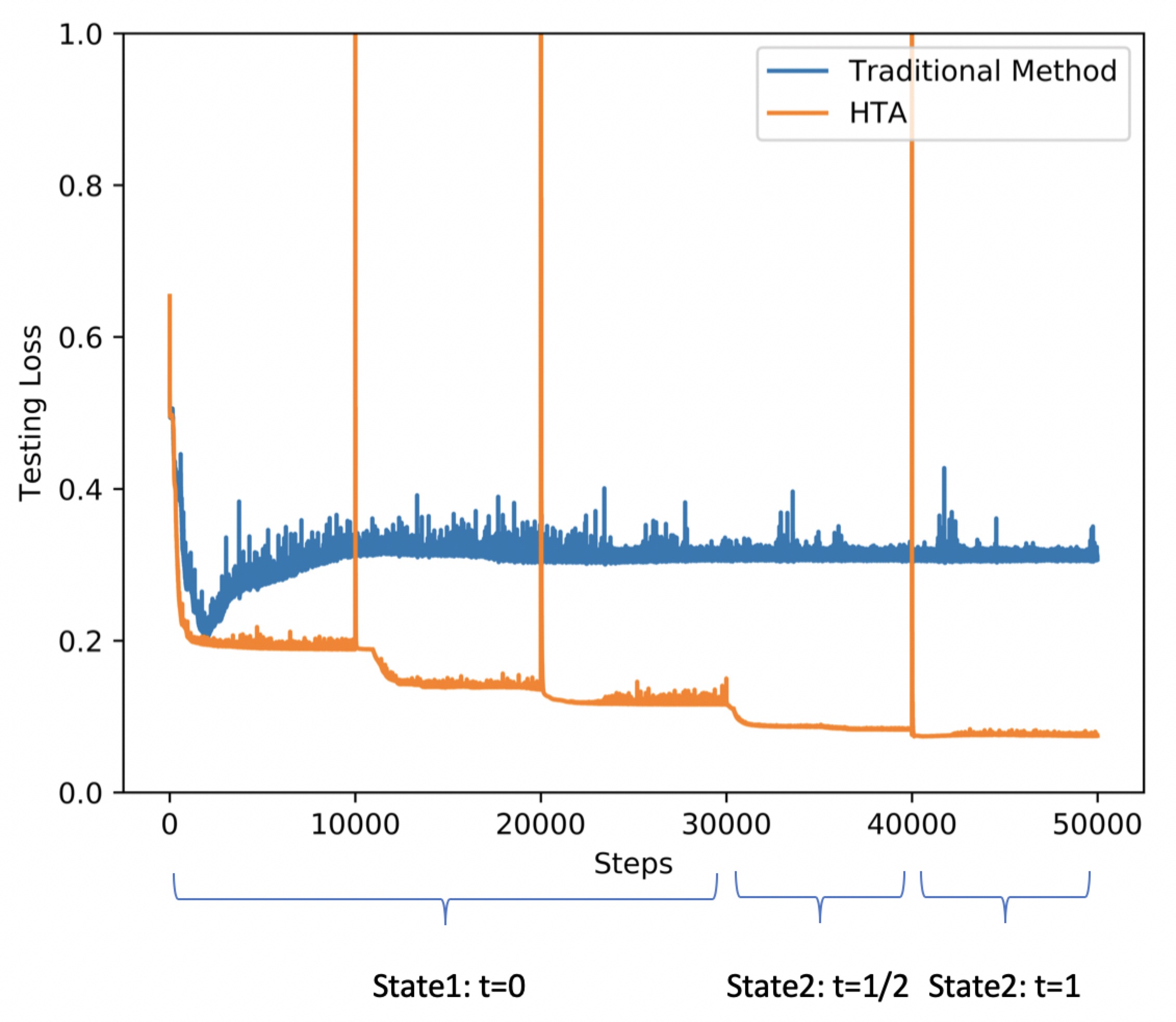}}
			\subfigure[Testing loss of  $n=6$]
			{\includegraphics[width=3.5in, angle=0]{./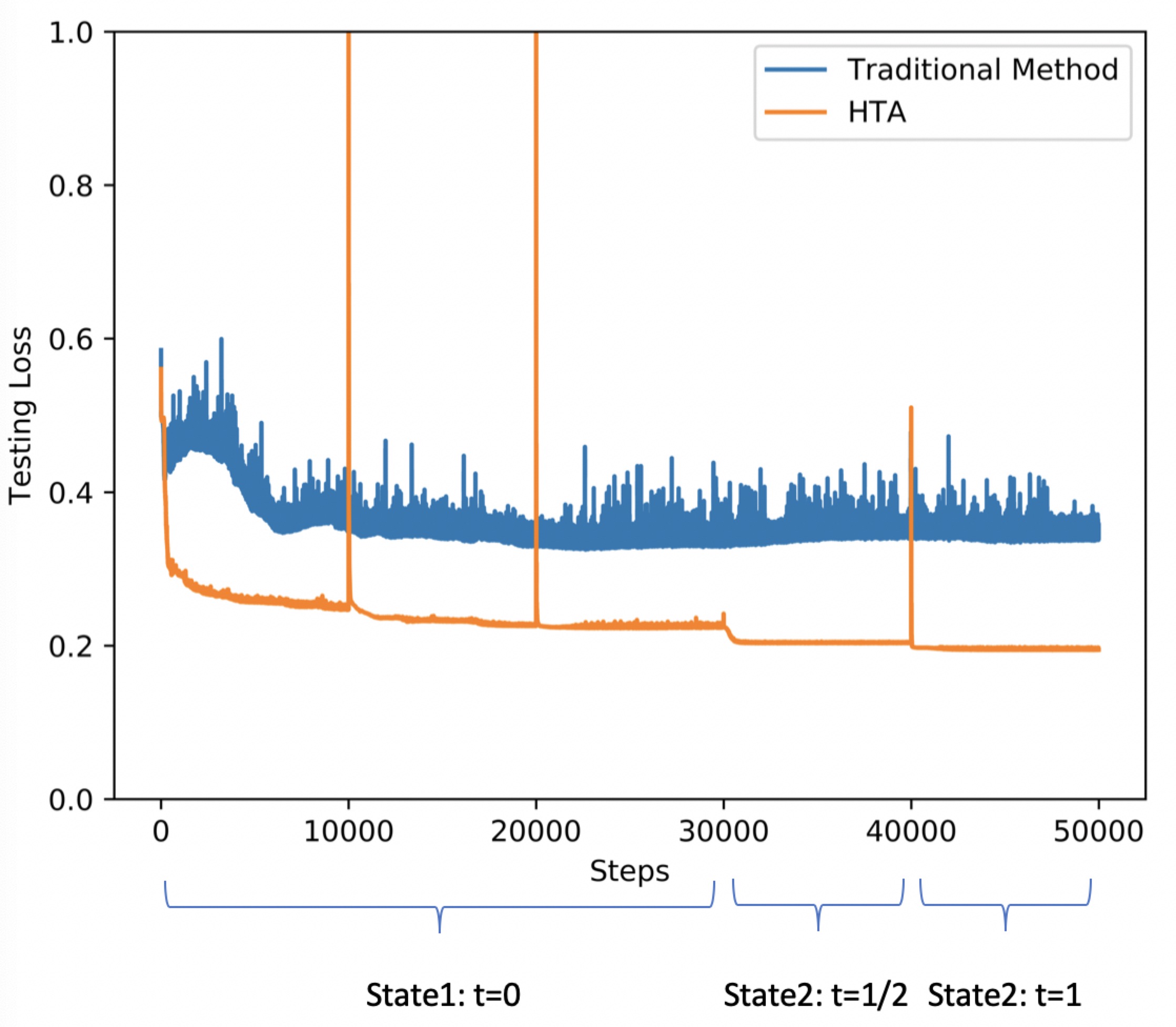}}
			\caption{Testing loss of approximating $\sin(x_1 + x_2 + \cdots + x_n)$.}
			\label{sin_testing_loss_hd}
		\end{figure*}
		\begin{figure}[h]
			\centering
			\includegraphics[width=0.7\textwidth]{./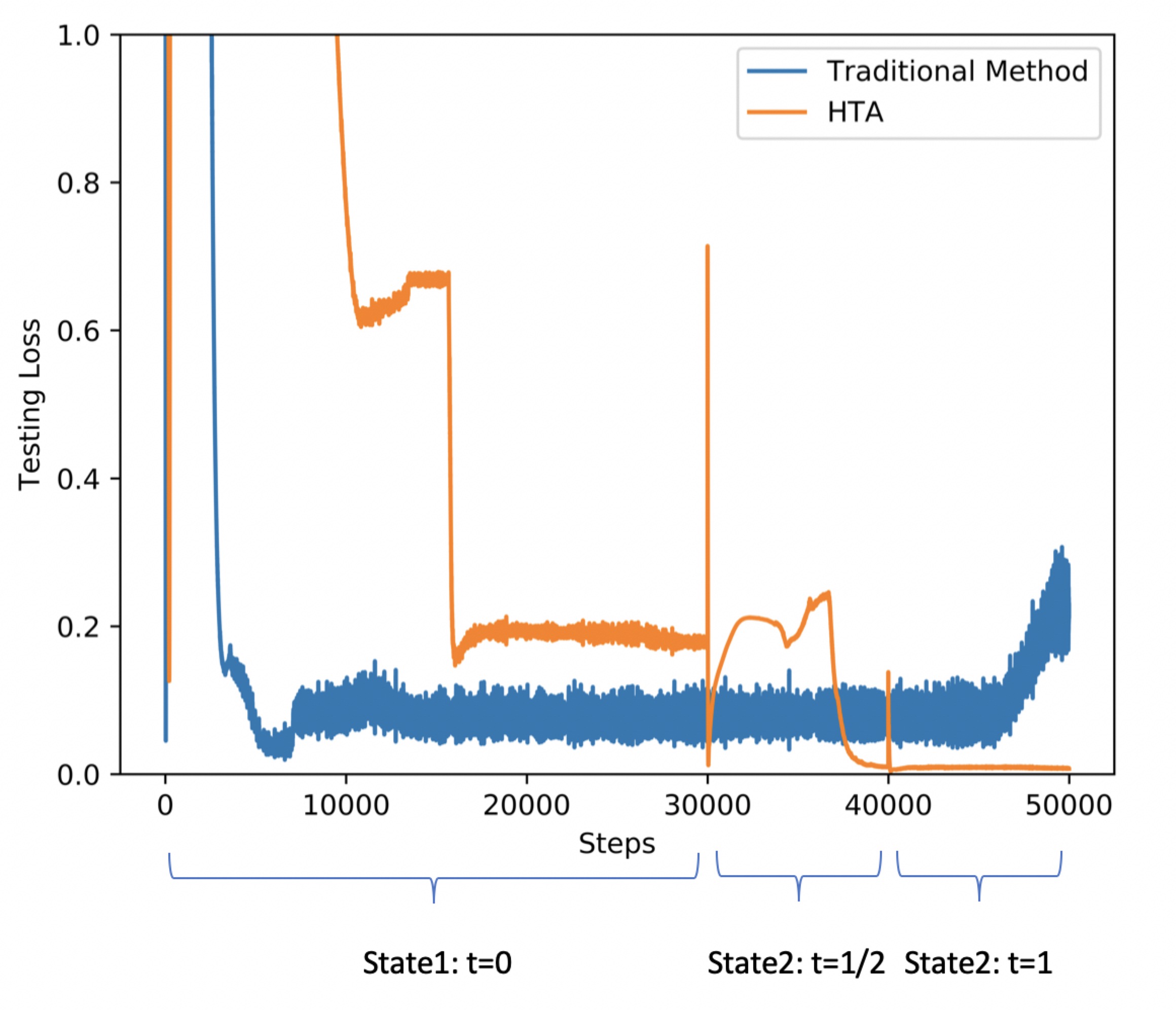}
			\caption{Testing loss of the Van der Pol equation.}\label{vdp}
		\end{figure}
		\begin{figure}[!t]
			\centering
			\subfigure[The traditional method]
			{\includegraphics[width=3.5in, angle=0]{./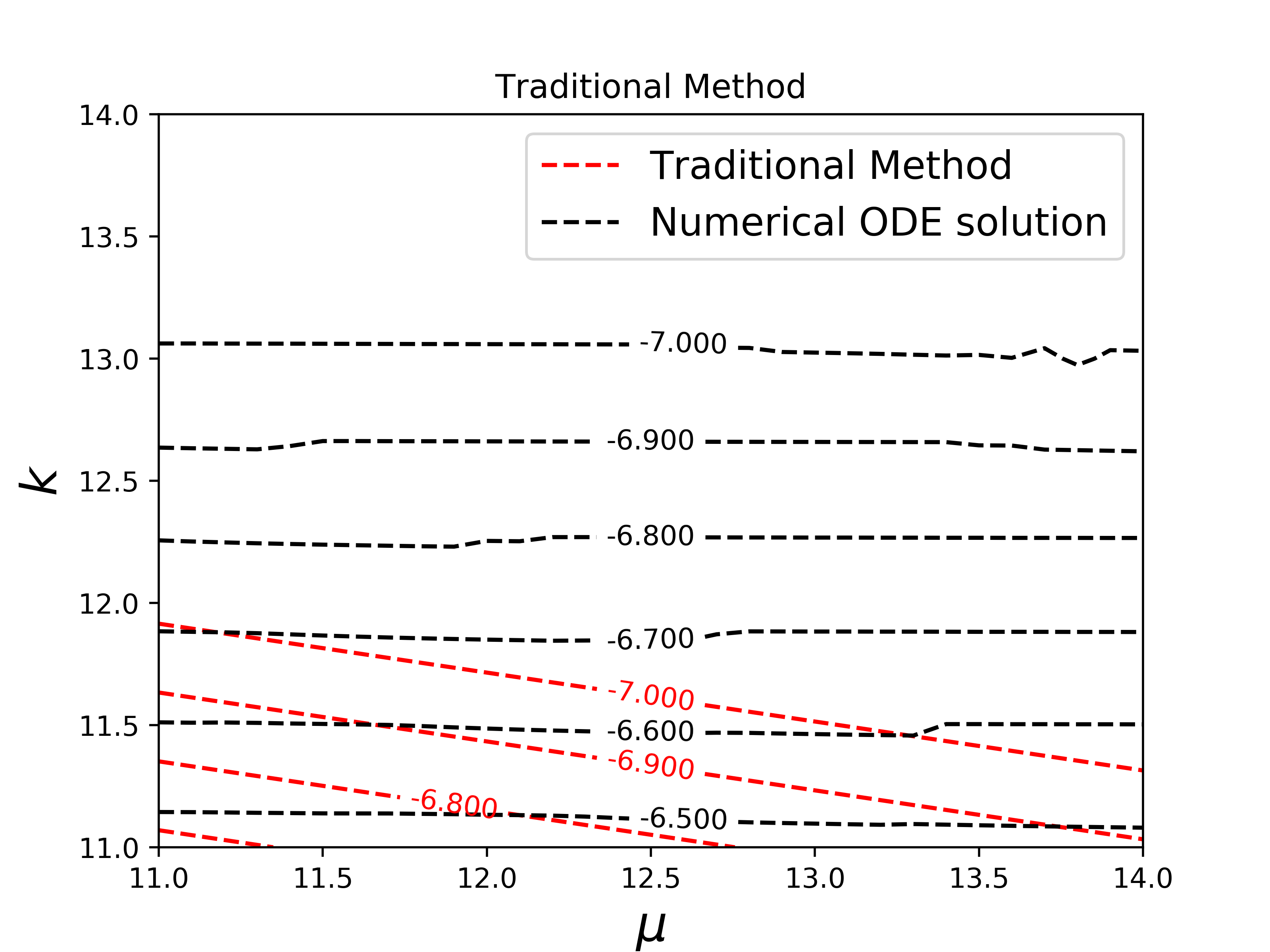}}
			\subfigure[The HTA]
			{\includegraphics[width=3.5in, angle=0]{./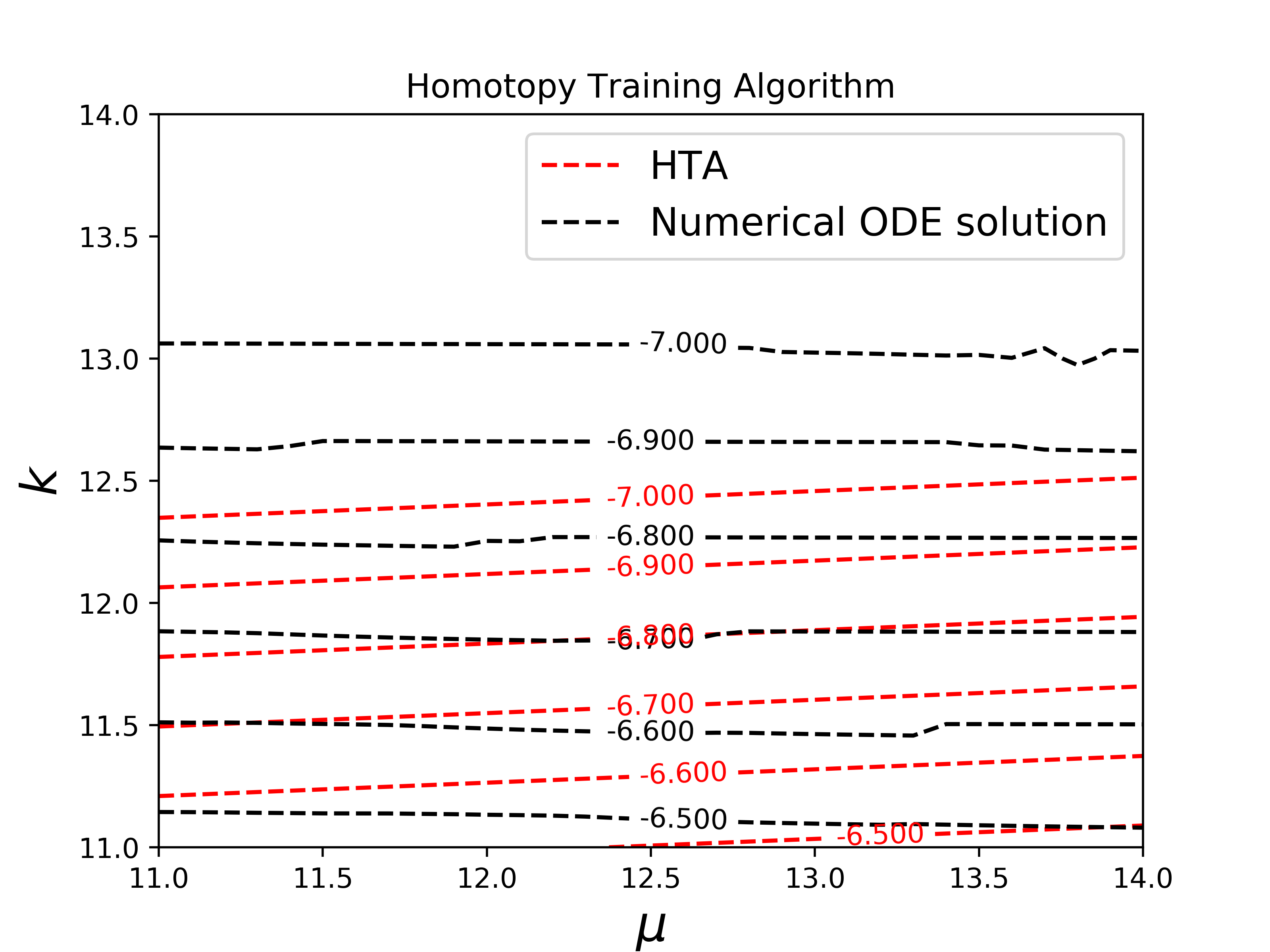}}
			\caption{Comparisons between the HTA and the traditional method by contour plots of $y(1;\mu,k)$.}
			\label{vdp_equation}
		\end{figure}

	\begin{figure}[h]
		\centering
		\includegraphics[width=\textwidth]{./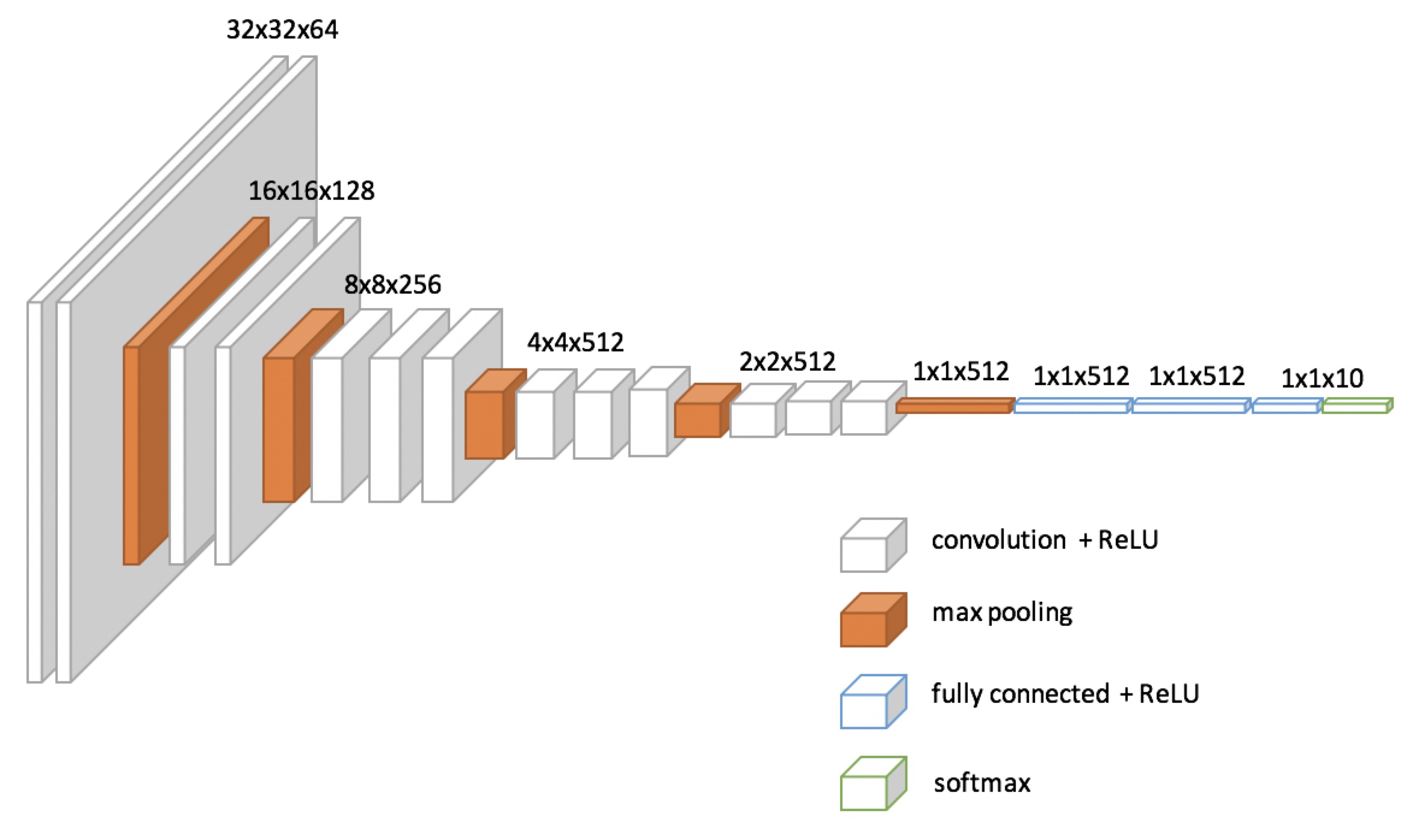}
		\caption{The structure of VGG models that consist of a convolution \& pooling neural network and
		a fully connected neural network. The HTA is applied on the fully connected layers only.}\label{VGG}
	\end{figure}

		\begin{figure}[h]
			\centering
			\includegraphics[width=0.6\textwidth]{./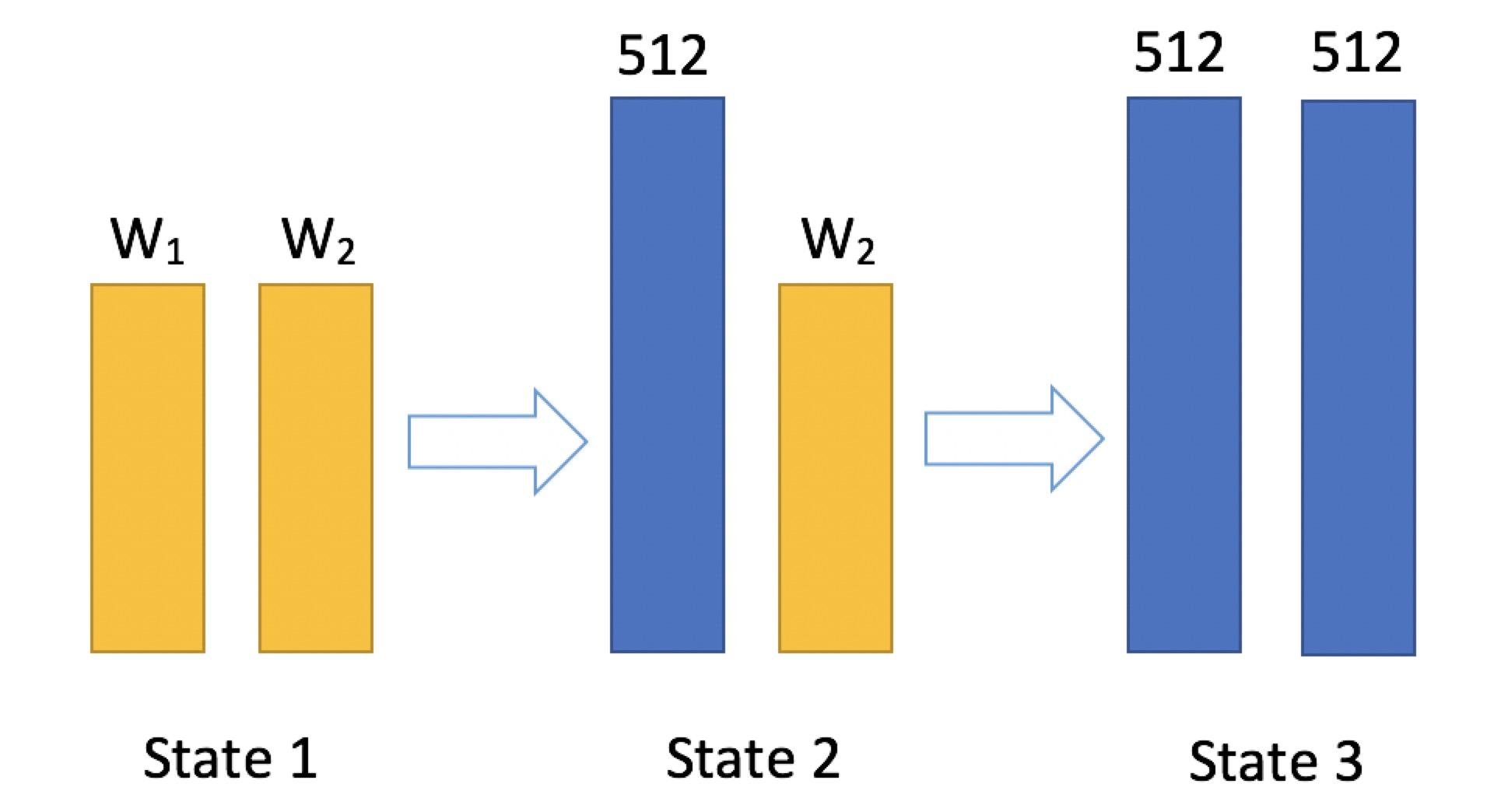}
			\caption{Three states of the VGG with the HTA.}\label{3states}
		\end{figure}

		\begin{figure}[h]
			\centering
			\includegraphics[width=0.7\textwidth]{./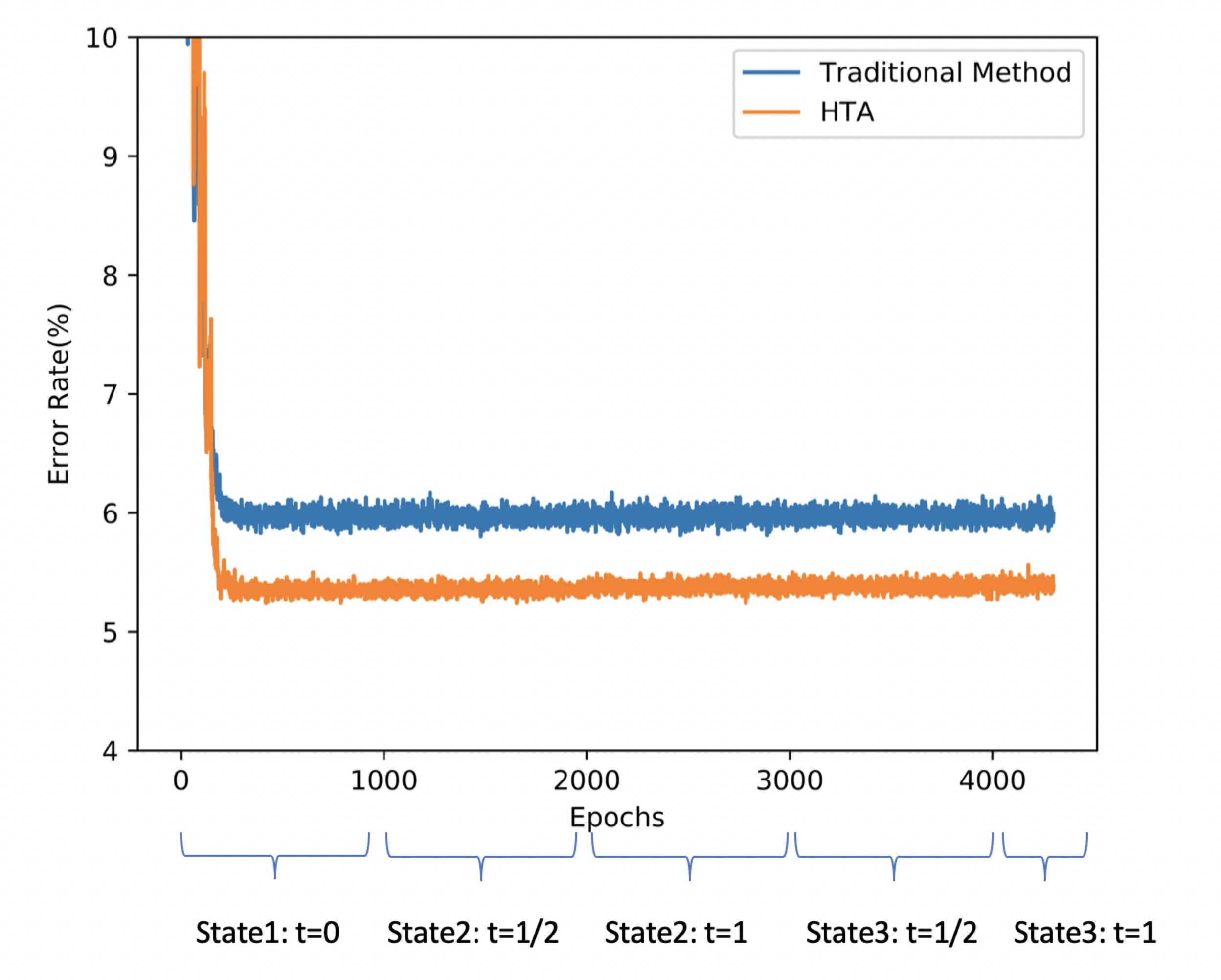}
			\caption{Comparisons of error rate for VGG13 between the HTA and the traditional method. }\label{vgg13_bn}
		\end{figure}

\noindent The output for table is:

		\begin{table}
			\begin{center}
				\begin{tabular}{| c | c | c | c |}
					\hline
					Dimensions (n) & Test Loss & Test Loss with HTA & Number of grid points \\ \hline
					1 & $0.078$ & $0.005$  & $10^2$ (uniform grid)\\ \hline
					2 & $0.195$ & $0.152$ & $10^4$ (uniform grid)\\ \hline
					3 & $0.347$ & $0.213$ & $10^6$ (uniform grid)\\ \hline
					4 & $0.393$ & $0.299$ & 2300 (sparse grid)\\ \hline
					5 & $0.493$ & $0.352$ & 5503 (sparse grid)\\ \hline
				\end{tabular}	
			\end{center}
			\caption{Testing loss of one-hidden-layer NNs.}
			\label{tab_testing_loss}	
		\end{table}

		\begin{table}
			\begin{center}
				\begin{tabular}{| c | c | c | c |}
					\hline
					Dimensions (n) & Test Loss & Test Loss with HTA & Number of gird points \\ \hline
					5 & $0.307$ & $0.074$ & 5503 (sparse grid)\\ \hline
					6 & $0.340$ & $0.194$ & 10625 (sparse grid)\\ \hline
					7 & $0.105$ & $0.029$ & 18943 (sparse grid)\\ \hline
					8 & $0.079$ & $0.022$ & 31745 (sparse grid)\\ \hline
				\end{tabular}	
			\end{center}
			\caption{Testing loss of two-hidden-layer NNs.}			\label{tab_testing_loss_hd}	
		\end{table}

	\begin{table}
			\begin{center}
				\begin{tabular}{| c | c | c |}
					\hline
					sample points $(\mu_i,k_i)$ & optima of traditional method $(\mu^*_i,k^*_i)$ &  optima of HTA $(\mu^*_i,k^*_i)$ \\ \hline
					$(11.1,12.9)$ & $(11.9, 11.6)$ & $(11.9, 12.6)$ \\ \hline
					$(11.9, 13.1)$ & $(11.9, 11.7)$ & $(11.9, 12.7)$ \\ \hline
					$(12.6, 11.4)$ & $(11.0, 11.0)$ & $(12.0, 11.4)$ \\ \hline
					$(13.2, 12.8)$ & $(11.9, 11.5)$ & $(11.9, 12.5)$ \\ \hline
					$(13.9, 11.1)$ & $(11.0, 11.0)$ & $(12.0, 11.2)$ \\ \hline
				\end{tabular}
			\end{center}
			\caption{Parameter estimation results.}			\label{para}
		\end{table}

		\begin{table}
			\begin{center}
				\begin{tabular}{| p{2.5cm} | p{2.5cm} | p{2.5cm} | p{2.5cm} |}
					\hline
					Base Model Name & Original Error Rate & Error Rate with HTA & Rate of Imrpovement(ROIs)\\ \hline
					VGG11 & $7.83\%$ & $7.02\%$ & $10.34\%$ \\ \hline
					VGG13 & $5.82\%$ & $5.14\%$ & $11.68\%$ \\ \hline
					VGG16 & $6.14\%$ & $5.71\%$ & $7.00\%$ \\ \hline
					VGG19 & $6.35\%$ & $5.88\%$ & $7.40\%$ \\ \hline
				\end{tabular}
			\end{center}
			\caption{The comparison between the HTA and the traditional method of VGG models on CIFAR-10}			\label{vgg_result}
		\end{table}

		\begin{table}
			\begin{center}
				\begin{tabular}{| c| c | c | c | c |}
					\hline
					Base Model Name & Original Error Rate & Error Rate with OSF & $w_1$ & $w_2$ \\ \hline
					VGG11 & $7.83\%$ & $7.37\%$ & 480 & 20 \\ \hline
					VGG13 & $5.82\%$ & $5.67\%$ & 980 & 310 \\ \hline
					VGG16 & $6.14\%$ & $5.91\%$ & 752 & 310 \\ \hline
					VGG19 & $6.35\%$ & $6.05\%$ & 752 & 310 \\ \hline
				\end{tabular}
			\end{center}
			\caption{The results of the algorithm for finding the optimal structure when applied to CIFAR-10: $w_i$ stands for the optimal width of $i$-th hidden layer.}			\label{BSF}
		\end{table}

		\begin{algorithm}
		\caption{The HTA algorithm for a neural network with two hidden layers.} 
		\begin{algorithmic}[1]
			\STATE{Solve the optimization problem $\mathcal{L}(H_1(x;\theta,0),y$) and denote the solution as $\theta_1^*$;}
			\STATE{Set an initial guess as $\theta^0=\theta_1^*\cup\theta_2^0$ and $N=\frac{1}{\delta t}$ where $\delta t$ is the homotopy stepsize;}
			\FOR {$i = 1,\cdots, N$}
			\STATE{Solve $\theta^i=argmin\mathcal{L}(H_1(x;\theta,i\delta t),y)$ by using $\theta^{i-1}$ as the initial guess}
				\ENDFOR
		\end{algorithmic}\label{alg}
		\end{algorithm}


\bibliographystyle{plain}

\bibliography{references}

\end{document}